\newcommand{\jo}[1]{{\textcolor{blue}{#1}}}
\newtheorem{theorem}{Theorem}[section]
\newtheorem{remark}[theorem]{Remark}
\newtheorem{assumption}[theorem]{Assumption}
\newtheorem{lemma}[theorem]{Lemma}
\newtheorem{definition}[theorem]{Definition}
\newtheorem{example}[theorem]{Example}
\theoremstyle{plain}
\def \R{\mathbb{R}}
\def \P{\mathbb{P}}
\def \E{{\mathbb{E}}}
\def \F{\mathbb{F}}
\def \cP{\mathcal{P}}
\def \cF{{\mathcal F}}
\def \T{{\mathcal T}}
\def \A{\mathcal{A}}
\def \and{\quad \text{and} \quad}
\DeclareMathOperator*{\argmax}{arg\,max}
\theoremstyle{definition}
\providecommand{\otherindexspace}[1]{}
\definecolor{falured}{rgb}{0.5, 0.09, 0.09}
\title[Entropy Regularization for MFGs of Optimal Stopping]{Entropy Regularization in Mean-Field Games of Optimal Stopping}
\author[J. Dianetti]{Jodi Dianetti}
\author[R. Dumitrescu]{Roxana Dumitrescu}
\author[G. Ferrari]{Giorgio Ferrari}
\author[R. Xu]{Renyuan Xu}
\thanks{J. Dianetti: Department of Economics and Finance, University of Rome Tor Vergata. Email: \href{mailto:jodi.dianetti@uniroma2.it}{jodi.dianetti@uniroma2.it}}
\thanks{R. Dumitrescu: ENSAE-CREST, Institut Polytechnique de Paris. Email: \href{mailto:roxana.dumitrescu@ensae.fr}{roxana.dumitrescu@ensae.fr}}
\thanks{G. Ferrari: Center for Mathematical Economics, Bielefeld University. Email: \href{mailto:giorgio.ferrari@uni-bielefeld.de}{giorgio.ferrari@uni-bielefeld.de}}
\thanks{R. Xu: Management Science \& Engineering, Stanford University. Email: \href{mailto:renyuanxu@stanford.edu}{renyuanxu@stanford.edu}}
\date{\today}
\numberwithin{equation}{section}
\begin{document}

\begin{abstract}
We study mean-field games of optimal stopping (OS-MFGs) and introduce an entropy-regularized framework to enable learning-based solution methods. By utilizing randomized stopping times, we reformulate the OS-MFG as a mean-field game of singular stochastic controls (SC-MFG) with entropy regularization. We establish the existence of equilibria and prove their stability as the entropy parameter vanishes. Fictitious play algorithms tailored for the regularized setting are introduced, and we show their convergence under both Lasry–Lions monotonicity and supermodular assumptions on the reward functional. Our work lays the theoretical foundation for model-free learning approaches to OS-MFGs.
\end{abstract} 
\maketitle  
\smallskip 
{\textbf{Keywords}}:
Mean-field game of optimal stopping, singular stochastic control, entropy regularization, randomized stopping times, fictitious play algorithm.

\smallskip 
{\textbf{AMS subject classification}}: 
91A16, 
60G40,  
93E20, 
68T01  


\section{Introduction}
\label{sec:intro} 

The study of mean-field games (MFGs) has become central to the analysis of large-population stochastic control systems, where individual agents interact through the empirical distribution of states and/or controls. For methodologies, techniques, and applications, we refer to the two-volume book \cite{CarmonaDelarue18}.
In this paper, we focus on a specific class of such problems: mean-field games of optimal stopping (OS-MFGs). Optimal stopping has a wide range of applications in Economics, Finance, Operations Research, and other applied fields. Relevant examples include pricing American derivatives, entry-exit problems (real options models), and optimal timing for buying or selling an asset, among others.
On the other hand, OS-MFGs  have received attention only very recently. Among the fundamental contributions on this topic (without learning considered), we first mention  \cite{nutz2018mean}, where the authors consider a specific game in which the interaction occurs through the number of players who have already stopped. In  \cite{carmona.delarue.lacker.2017.timing}, the authors study an optimal stopping game with Brownian common noise, inspired by a model of bank runs. They prove the strong existence of mean-field equilibria in a setting with strategic complementarity, and weak existence in a setting with continuity. In \cite{Bertucci}, a purely analytical approach is adopted to solve the problem -- namely, the study of the system of coupled Hamilton-Jacobi-Bellman and Fokker-Planck equations -- and the existence of mixed solutions is proved. { More recently, a compactification approach based on the linear programming formulation is developed in a series of papers \cite{bouveret.dumitrescu.tankov.20}, \cite{dumitrescu2021control} and \cite{dumitrescu.leutscher.tankov.2022linear}, in which the existence of relaxed solutions, interpreted via occupational measures, is established. These techniques have been then applied to solve entry-exit optimal stopping games in electricity markets in \cite{aidentry} and \cite{bassiere2024mean}, and in the case with common noise in \cite{dumitrescu2024energy}. In \cite{roxanaetal}, a rigorous connection between the occupation measures arising in the linear programming approach and randomized stopping is established.} In addition, \cite{possamai2023mean} studies an optimal stopping game through the lens of the master equation. Another recent contribution is \cite{he2023mean}, where the authors develop a new approach to solving various types of MFGs {(including optimal stopping)} based on a mean-field version of the Bank-El Karoui representation theorem for stochastic processes.  In the context of optimal stopping MFGs with partial information, \cite{campbell2024mean,campbell2025bayesian,souganidis2025mean} study the existence of the mean-field solution for a one-dimensional filtering problem under different model set-ups. Finally, related works on {McKean-Vlasov optimal stopping} are \cite{talbi2023dynamic} and \cite{cosso2025mean}.

Despite their importance, OS-MFGs pose significant theoretical and computational challenges. First,  they require more subtle techniques to be solved than MFGs with standard control because of the irregularity of the flow measure generated by the simultaneous exit of a significant number of players. This class of games becomes particularly challenging when the goal is to develop model-free, learning-based solution methods. 

The primary motivation of our work is to establish the foundations of a suitable OS-MFG framework that supports model-free solution methods. A central challenge in this direction is to rigorously formalize and analyze randomized and explorative strategies in games, which play a crucial role in the design of reinforcement learning (RL) algorithms. Establishing the existence and uniqueness of (mean-field) equilibria that accommodate such strategies is therefore essential—not only for theoretical completeness but also for enabling practical learning algorithms with provable guarantees. In the absence of such a foundation, rigorous analysis of multi-agent learning remains elusive. Moreover, the convergence of RL methods often depends on the properties of the underlying iterative schemes (e.g., fictitious play), whose well-posedness similarly hinges on the existence and uniqueness of equilibria in games that allow for randomized behavior (see, e.g., \cite{guo2019learning,guo2023general}). This need becomes even more pronounced in environments characterized by irregular or sparse decision-making, such as optimal stopping problems, where agents make a single, irreversible decision based on limited information. Unlike standard Markov Decision Processes (MDPs) \cite{puterman2014markov,sutton1998reinforcement}, where frequent actions and rewards can reduce the reliance on explicit exploration, optimal stopping games require more deliberate and carefully designed exploration mechanisms to facilitate information acquisition \cite{devidze2022exploration}. On the technical side, however, a major challenge arises in applying RL to OS-MFGs. Standard equilibrium-finding algorithms in MFGs typically rely on the stability of optimal controls with respect to variations in the distribution of agents. This stability, however, is particularly difficult to ensure in the context of optimal stopping, where understanding how equilibrium stopping policies respond to small perturbations in the measure flow is highly nontrivial. To illustrate this, consider that in a Markovian setting, equilibrium stopping rules are often given by hitting times of the underlying state process at a free boundary. Determining the stability properties (e.g., Lipschitz continuity) of these free boundaries is already a technically intricate problem, even in single-agent optimal stopping settings (see, e.g., \cite{de2019lipschitz} and the discussion therein). 
\vspace{0.15cm}

\textbf{Our contributions.} To address these challenges, we develop a new theoretical framework that enables the use of model-free algorithms for OS-MFGs. The core idea is to reformulate the game as a MFG of singular stochastic controls (SC-MFG) by introducing randomized stopping times (see \cite{dianetti2024exploratory, touzi.vieille.2002continuous}). A randomized stopping time can be interpreted as the (conditional) probability of stopping before a certain time \( t \). This interpretation leads naturally to an adapted, nonnegative, nondecreasing process with right-continuous sample paths, bounded by 1 -- an object that, following terminology from control theory, we refer to as a singular control (see Chapter VIII in \cite{fleming.soner2006} for an introduction to singular stochastic controls). 
To promote exploration, we introduce an entropy regularization term into the stopping functional, as previously established in the single-agent setting of \cite{dianetti2024exploratory}. This regularization term is governed by a temperature parameter $\lambda \geq 0$ and induces strict concavity in the performance criterion. This change enables optimizers to deviate from pure $0\text{-}1$ strategies (of not stopping or stopping) and instead favor reflecting-type controls.  {In the context of MFG, the equilibrium is characterized via a two-step procedure.} First, given a fixed flow of measures and a joint distribution, we determine the singular control that maximizes the regularized reward functional. Second, we impose the so-called consistency condition, which requires that the fixed input measures coincide with the distribution of the state process before stopping and the joint distribution of the (randomized) stopping time and the stopped state. Naturally, in this framework, the consistency conditions must be reformulated in terms of randomized stopping times -- namely, in terms of the random Borel measure on the time axis induced by the singular control. The particular structure of these consistency conditions prevents us from applying existing existence results for mean-field equilibria in singular control games (see, e.g., \cite{Campietal, denkert.horst.2023extended, fu2023extended, Fu&Horst17}). We therefore establish a novel existence result for equilibria in our SC-MFG framework, which we believe is of independent theoretical interest (see Theorem \ref{theorem existence MFGE for SC-MFG}).

Having proved the existence of equilibria in randomized stopping times, we then move on to establishing $\lambda$-stability as the temperature parameter $\lambda \downarrow 0$. We show that any equilibrium of the SC-MFG approximates, as $\lambda \downarrow 0$ (up to a subsequence),  an equilibrium of the original MFG of optimal stopping without entropy regularization. Furthermore, under a Lasry-Lions monotonicity condition, we can prove a stronger result: the entire sequence of mean-field equilibria of the singular control game (parametrized by $\lambda$) converges to an equilibrium of the original optimal stopping game {(see Theorem \ref{theorem qualitative stability in lambda})}. This result ensures that the solutions of the regularized problems converge to equilibria of the original OS-MFG, providing a crucial theoretical justification for using entropy-based approximations in RL methods.

We then design novel fictitious-play algorithms tailored to the entropy regularized SC-MFG framework. We establish their convergence under both the Lasry–Lions monotonicity condition (see Theorem \ref{theorem fictitious monotone}) and in the case of supermodular reward structures (see Theorem \ref{theorem fictitious supermodular}). 
It is important to note that the presence of the entropy regularization term introduces nonlinearity into the performance criterion, placing our model outside the scope of recent approaches based on linear programming for  MFGs \cite{bouveret.dumitrescu.tankov.20, dumitrescu2021control, dumitrescu.leutscher.tankov.2022linear}. This nonlinearity necessitates the development of new analytical and numerical tools.
The fictitious-play algorithm under the Lasry–Lions condition therefore extends previous algorithms developed under this monotonicity assumption to cases where the payoff functional is non-linear with respect to the control. 
Under the supermodularity condition, the fictitious-play algorithm is new in the literature.
The theoretical foundation presented here will be complemented by a companion paper focused on the algorithmic design, related theoretical convergence analysis, and numerical validation of the proposed framework, which is currently in progress \cite{jodietal.}.
\vspace{0.15cm}

\textbf{Related literature.} Reinforcement learning applied to optimal stopping problems is closely tied to the broader challenge of learning in environments with sparse rewards \cite{devidze2022exploration,hare2019dealing}. In these settings, a reward is only granted at the stopping time, resulting in extreme sparsity and significant learning challenges compared to more conventional control tasks, whether formulated in continuous time or  classical MDPs.

When the model is fully known, recent studies such as \cite{reppen2022neural} and \cite{soner2023stopping} have introduced deep learning techniques to approximate optimal stopping boundaries. In a related vein, \cite{bayer2021randomized} investigated randomized formulations of optimal stopping and provided convergence results for both forward and backward Monte Carlo-based optimization schemes. Similarly, \cite{ata2023singular} showcased the effectiveness of deep learning methods for solving high-dimensional singular control problems.

In the realm of continuous-time RL, there is an active line of work that has been focused on regular controls, see for example \cite{bo2025optimal,jia2022policy,jia2022policy-m,ma2024convergence,wang2018exploration} to name a few. In addition, a notable advancement comes from \cite{denkert2024control}, who proposed a general policy gradient framework applicable to a wide array of stochastic control problems, including optimal stopping, impulse control, and switching. Their approach leverages connections between stochastic control and its randomized counterparts. However, their framework currently lacks theoretical guarantees for convergence. The work \cite{dong2024randomized} studies a regularized version of the one-dimensional American Put option under a Shannon entropy framework (see also \cite{Haoyang-etal} for a similar approach to impulse control problems). By employing an intensity-based control formulation, they introduce exploration and prove convergence of the policy iteration algorithm (PIA) for fixed temperature parameters. Nevertheless, the convergence of the optimal policy as the temperature parameter $\lambda \downarrow  0$ remains an open question. An alternative perspective was recently offered by \cite{dai2024learning}, who reformulated the optimal stopping problem as a regular control problem with binary ($0$ or $1$) actions and applied entropy regularization. This transformed the problem into a classical entropy-regularized control setting, enabling the use of standard RL algorithms. The very recent paper \cite{liang2025reinforcement} proposes a continuous-time reinforcement learning framework for a class of singular stochastic control problems without entropy regularization and generalizes the existing policy evaluation theories with regular controls to learn  optimal singular control law and develop a policy improvement theorem.

Another important research direction leverages non-parametric statistical methods for stochastic processes to design learning-based control algorithms \cite{christensen2023data2,christensen2023nonparametric,christensen2024learning,christensen2023data}. In particular, \cite{christensen2023nonparametric} and \cite{christensen2024learning} addressed one-dimensional singular and impulse control problems by learning critical thresholds using a non-parametric diffusion framework. This approach was extended to multivariate reflection problems in \cite{christensen2023data}, while \cite{christensen2023data2} contributed theoretical guarantees, including regret bounds and non-asymptotic PAC estimates.

Entropy regularized MFGs and their reinforcement learning (RL) formulations have received increasing attention over the past year. We do not attempt to provide an exhaustive literature review here, but instead highlight a few key references: \cite{carmona2023model}, which presents an RL approach to model-free MFGs for MDPs; \cite{anahtarci2023q}, which discusses Q-learning in regularized MFGs for MDPs; \cite{carmona2021convergence}, which provides a convergence analysis of machine learning algorithms for mean-field control and games over a finite time horizon in continuous time and space; and \cite{firoozi.jaimungal.2022exploratory, guo2022entropy}, which addresses entropy regularized MFGs in diffusion settings.

As far as it concerns the more specific class of entropy-regularized MFGs of optimal stopping -- as those considered in this paper --  the only paper brought to our attention is \cite{yu2025major}. This paper investigates a discrete-time major-minor MFG where the major player can choose to either control or stop. The goal is to find a relaxed (randomized) stopping equilibrium, formulated as a fixed point of a set-valued map, whose direct analysis is difficult due to the major player's influence. To address this, the authors introduce entropy regularization for the major player's problem and reformulate the minor players' stopping problems using linear programming over occupation measures, in the spirit of \cite{bouveret.dumitrescu.tankov.20}. They prove the existence of regularized equilibria via a fixed-point theorem and show that, as regularization vanishes, these equilibria converge to a solution of the original problem, thus establishing the existence of a relaxed equilibrium. Although there are clear connections between \cite{yu2025major} and the present paper, the mathematical analysis is entirely different, as the problem in \cite{yu2025major} involves a Stackelberg game (which is not present in our case) on top of the mean-field interaction, within a discrete-time and discrete-space setting -- unlike our continuous-time and continuous-space framework.

\vspace{0.15cm}

\textbf{Organization of the paper.} The rest of the paper is organized as follows. In Section \ref{sec:preliminaries} we present the OS-MFG problem and introduce its entropy-regularized formulation through randomized stopping times. In Section \ref{sec:existence} we prove existence and stability of equilibria, while in Section \ref{sec:fictplay} we present the fictitious play algorithms and their convergence. Finally, Appendix \ref{sec:App} collects results on a connection between the studied MFG of singular control and (another) auxiliary MFG of optimal stopping.


\subsection{General notation} 
Let $p\geq 1$ and $(E,d)$  a non-empty Polish space. Define the set
$\mathcal P _p^{sub} (E)$ of subprobability measures $\nu$ on the Borel subsets of $E$ such that $\int_E d(z,z_0)^p \nu(dz) < \infty$, for some (and thus all) $z_0 \in E$.
The space $\mathcal P _p (E)$ represents the set of elements of $\mathcal P _p^{sub} (E)$ which are probability measures, and it is 
endowed with the Wasserstein distance $d_p$ defined as
\begin{align}\label{probdist}
d_p(\bar \nu, \nu) := \inf_{\pi \in \Pi(\bar \nu, \nu) } \int_{E \times E} d(\bar z,z)^p \pi (d\bar z,d z),
\end{align}
where $ \Pi(\bar \nu, \nu)$ is the set of $\pi \in  \mathcal P _p (E \times E)$ which has $\bar \nu$ as first marginal and $\nu$ as second marginal.
Following \cite[Section B]{claisse2023mean}, to define the Wasserstein distance $d^\prime_p$ on $\mathcal{P}^{sub}_p(E)$, we introduce a cemetery point $\partial$ and obtain the enlarged space $\bar{E}:=E \cup \partial$. By defining $d(z,\partial):=d(z,z_0)+1$, we extend the definition of $d$ on $\bar{E}$, in such a way that $(\bar{E},d)$ is still Polish. We define the classical Wasserstein distance $d_p$ on $ \bar{E}$ using the same expression $\eqref{probdist}$. The Wasserstein type distance $d_p^\prime$ on $\mathcal{P}^{sub}_p(\bar{E})$ is given by
\begin{align*}
d_p^{\prime}(\mu,\nu):=d_p(\bar{\mu},\bar{\nu}),
\end{align*}
where 
$$\bar{\mu}(\cdot):=\mu(\cdot \cap E)+(1-\mu(E))\delta_{\partial},\,\,\ \bar{\nu}(\cdot):=\nu(\cdot \cap E)+(1-\nu(E))\delta_{\partial}. $$
For $p=1$, we recall the following duality result (see Lemma B.1. in \cite{claisse2023mean}): for all $\mu,\nu \in \mathcal{P}_1^{sub}(E)$,
\begin{align}
d^{\prime}_1(\mu,\nu)=\sup_{\text{Lip}_1^0(E)}\{\mu(\varphi)-\nu(\varphi) \}+|\mu(E)-\nu(E)|,
\end{align}
where $\text{Lip}_1^0(E)$ denotes the collection of all functions $\varphi:E \to \mathbb{R}$ with Lipschitz constant smaller or equal to $1$ and such that $\varphi(z_0)=0$.\\
Fix $T>0$. We introduce the set $M_p(E)$ of measurable functions $m:[0,T]\to \mathcal P _p^{sub} (E)$ identified $dt$-a.e.\ on $[0.T]$, endowed with the convergence in measure topology which is induced by the metric
\begin{align}
d_p^M(m,\nu):=\int_0^T 1 \wedge d_p^\prime(m_t,\nu_t)dt,\,\, m,\nu \in M_p(E).
\end{align}
{A sequence $(m^n)_n$ converges to $m$ ($m^n \to m$, in short) if for any $\varepsilon>0$ we have
$$
\int_0^T \mathds 1 _{\{ t \, : \, d^\prime_p (m^n_t,m_t) >\varepsilon \}} dt \to 0.
$$
Recall that, for any convergent sequence $m^n \to m$, there exists a subsequence $(m^{n_k})_k$ such that
\begin{equation}
\label{eq conv in dt after conv in measure}
d^\prime_p (m^n_t,m_t) \to 0, \quad dt\text{-a.e.}
\end{equation}
We refer to Appendix B in \cite{dumitrescu.leutscher.tankov.2022linear} for further details.}

\noindent Throughout the paper, $C>0$ will denote a generic constant that may change from line to line.





\section{MFGs of optimal stopping and entropy regularization}
\label{sec:preliminaries}

This section formally introduces the OS-MFG problem with randomization and entropy regularization, leading to a singular control formulation. More specifically, Section~\ref{sec:preliminary_assumption} introduces preliminaries and model assumptions; Section~\ref{sec:optimal_stopping} describes the OS-MFG problem; and Section~\ref{secreg} develops the framework for randomized stopping times, entropy regularization, and the resulting singular control problem.

\subsection{Preliminaries and assumptions}
\label{sec:preliminary_assumption}
Let $T \in (0,\infty)$ be given and fixed throughout the rest of the paper. For $d,d_1 \in \mathbb N \setminus \{0\}$, consider the continuous functions
$ b: [0,T] \times \mathbb{R}^{d} \rightarrow \mathbb{R}^{d}, \ \sigma:[0,T] \times \mathbb{R}^{d} \rightarrow \mathbb{R}^{d \times d_1}, \  f :[0,T] \times \mathbb R ^d \times \cP^{sub} (\mathbb R ^d) \to \mathbb R$, $g : \mathbb R ^d \times \cP (\mathbb R ^d) \to \mathbb R$.



Define 
$$
M_p:=M_p(\mathbb{R}^d) \text{ and }\mathcal M : = M_p \times \mathcal P _p ( [0,T] \times  \R^d ),
$$ 
endowed with the product topology.
Elements of $\mathcal M$ will typically be denoted by couples $(m,\mu)$.
Moreover, given a sequence $(m^n,\mu^n)_n \subset \mathcal M$ and a limit point $(m,\mu) \subset \mathcal M$, we will write
$$
(m^n,\mu^n)_n \to (m,\mu)
$$
to denote the convergence in the product topology.

On a complete probability space $(\Omega, \mathcal F, \mathbb P )$, consider a $d_1$-dimensional Brownian motion $W:=(W_t)_t$ and a $\R^d$-valued square integrable $\mathcal{F}_0$-measurable random variable $x_0$ independent of $W$.  
Denote by $\mathbb F ^{x_0,W}:= (\mathcal F ^{x_0,W}_t)_{t}$  the right-continuous extension of the filtration generated by $W$ and $x_0$.
For a time-horizon $T \in (0,\infty)$, let $\mathcal T$ be the set of $\mathbb F ^{x_0,W}$-stopping times $\tau$ such that $\tau\leq T$ a.s..  

\begin{assumption}
    \label{assumption}
    The data of the problem $(x_0,b,\sigma,f,g)$ verify: 
    \begin{enumerate}
    \item\label{assumption: integrability x0} $\E [|x_0|^p] < \infty $. 
    \item\label{assumption: condition b sigma} There exists a constant $L>0$ such that
    $$ \begin{aligned}
         & | b(t, x)| + | \sigma (t, x)|  \leq L (1+|x|),\quad \forall t \in [0,T], \ x \in \mathbb R ^d , \\
        &|b (t, \bar x) - b (t,x)| + |\sigma (t, \bar x) - \sigma (t,x)|  \leq L |\bar x - x|, \quad  \forall t \in [0,T], \ x, \bar x \in \mathbb R ^d.
    \end{aligned}
    $$ 
    \item\label{assumption: condition f g} There exists a constant $K>0$ such that
    $$
    \begin{aligned}
    |f(t,x,m)| &\leq K \Big(1+|x|^p + \int_{\R^d} |z|^p m(dz) \Big), \quad \forall (t,x,m) \in [0,T] \times \mathbb R ^d \times \cP_p^{sub} (\R^d),\\
    |g(x,\mu)| &\leq K \Big(1+|x|^p + \int_{[0,T] \times \R^d} |z|^p \mu(ds,dz) \Big), \quad \forall (x,\mu) \in  \mathbb R ^d \times \cP_p ([0,T] \times \R^d).
    \end{aligned}
    $$
    \item The functions $f$ and $g$ are continuous, and $g$ is continuous in $\mu$, locally uniformly with respect to $x$. That is, there exist a constant $K>0$ and a function $w_g : \cP_p ([0,T] \times \R^d) \times \cP_p ([0,T] \times \R^d) \to [0,\infty)$  with $\lim_{d_p(\mu, \bar \mu) \to 0} w_g(\bar \mu, \mu) = 0$ such that  $|g(x,\bar \mu) - g(x, \mu) | \leq K (1 + |x|^p) w_g(\bar \mu, \mu)$ for any $x \in  \mathbb R ^d, \mu, \bar \mu \in \cP_p ([0,T] \times \R^d)$.
    
   \end{enumerate}
\end{assumption}

\subsection{The MFG of optimal stopping}\label{sec:optimal_stopping}

Consider the mean-field game of optimal stopping (OS-MFG, in short) in which, for a given behavior  $(m,\mu) \in \mathcal M$ of the population of players, the representative player maximizes, over the choice of stopping times $\tau \in \T$, the profit functional
\begin{equation}
\label{eq}
\begin{aligned}
     J(\tau , m,\mu) &:= \mathbb{E}\left[\int_{0}^{\tau} f\left(t, X_{t}, m_{t}\right) d t + g\left(X_{\tau}, \mu\right)\right], \\
\text{subject to } dX_t &= b(t, X_t )dt + \sigma (t,X_t) dW_t, \ t \in [0,T], \  X_0 =x_0.
\end{aligned}
\end{equation}
Thanks to Assumption \ref{assumption}, there exists a unique strong solution to the stochastic differential equation (SDE) for $X$ with continuous paths a.s., and such that
\begin{equation}
    \label{eq estimate SDE}
    \E \Big[ \sup_{t \in [0,T] } |X_t|^p \Big] < \infty.
\end{equation}
Moreover, the profit functional $J(\tau, m,\mu)$ is well defined for any $\tau \in \mathcal T$, the maximization problem has finite value (i.e., $\sup_{\tau } J(\tau , m,\mu) < \infty$), and  there exists an  optimal stopping (i.e., $\tau^* \in \argmax_\tau J(\tau, m, \mu)$). We refer the interested  reader to Appendix D in \cite{karatzas.shreve.98.finance} for further details.

We consider the following notion of equilibrium.  
\begin{definition}[OS-MFG equilibrium with strict stopping]\label{def:equilibrium}
An \textit{optimal stopping MFG equilibrium (with strict stopping)} is a triple $(\hat \tau, \hat m, \hat \mu)  \in  \mathcal{T} \times \mathcal{M}$ such that
\begin{equation}\label{eq def equilibrium OS MFG}
\begin{cases}
& \begin{matrix} \hat \tau \in \argmax_{\tau \in \T}J(\tau , \hat m,\hat \mu),\end{matrix}  \\ 
& \hat m_{t}(A)=\mathbb{P}\left( X_{t} \in A, t\leq \hat \tau\right), \quad \forall A \in \mathcal{B}\left(\R^d\right), \  t \in[0, T),\\
& \hat \mu\left(B \times [0,t] \right)=\mathbb{P}\left(X_{\hat \tau} \in B, \hat \tau \leq t\right), \quad \forall B \in \mathcal{B}\left(\R^d\right), \ t \in [0, T]. 
\end{cases}
\end{equation}
The first condition is the optimality condition, while the second and the third  condition correspond to the so-called consistency conditions.
\end{definition}

 Under the equilibrium notion in Definition \ref{def:equilibrium}, Problem \ref{eq} introduces a MFG where agents interact through the running reward function $f$ with instantaneous mean-field measure $m_t$ and through the terminal cost $g$ with measure $\mu$ quantifying the stopping time and stopping position.

\subsection{Singular control formulation and entropy regularization}
\label{secreg}
In this subsection, we reformulate the OS-MFG problem with the aim of providing a theoretical framework that enables agents to learn equilibria using RL algorithms.  To this end, we modify the original OS-MFG framework by incorporating the exploration–exploitation trade-off, which is empirically known to enhance the convergence of learning methods. Fictitious play algorithms will be discussed later in Section~\ref{sec:fictplay}.


\subsubsection{Randomized stopping times and singular controls}

We first introduce the concept of randomized stopping times. We borrow ideas from the literature in game theory (see, e.g., \cite{touzi.vieille.2002continuous}) and from \cite{dianetti2024exploratory}, where the entropy regularized version of an optimal stopping problem is considered.

A randomized stopping time can be intrinsically connected to singular control when interpreted as the conditional probability of stopping before a given time 
$t$. To build this connection formally, define the set of stochastic processes 
\begin{align*}
    \A:= \big\{ \xi : & \, \Omega \times[0, T] \to [0,1], \text{ $\F^{x_0,W}$-adapted, nondecreasing, c\`adl\`ag, with $\xi_{0-}=0$ and $\xi_T =1$} \big\}. 
\end{align*}
In the rest of this paper, with reference to the terminology of stochastic control theory, we shall refer to an element of $\A$ as to a singular control (see, e.g., Chapter VIII in \cite{fleming.soner2006}).

Consider a random variable $U:\Omega \to [0,1]$ which is uniformly distributed and independent from $W$ and $x_0$.
Unless to enlarge the original probability space, we can assume $U$ to be defined on $(\Omega, \mathcal F, \P)$ as well, and to be $\mathcal{F}$-measurable.
Given $\xi \in \mathcal A$, a randomized stopping time is defined as 
$$
\tau^\xi := \inf \left\{t \in [0,T] \, | \, \xi_{t}> U \right\},
$$
with the convention $\inf \emptyset = T$.

Notice that $\tau^\xi$ is a stopping time with respect to the enlarged filtration $\mathbb F ^{x_0,W,U}$, generated by $x_0, W$ and $U$. Hence $\tau^\xi$ is not necessarily an $\mathbb F ^{x_0,W}$-stopping time.
However, if $\tau \in \mathcal T$, the process $\xi^\tau \in \mathcal A$ defined by $\xi^\tau := ( \mathds 1 _{\{ t \geq \tau \} } )_t$ is such that $\tau = \tau^{\xi^\tau}$. 
This gives a natural inclusion of $\left\{(\xi^\tau_t)_t,\,\, \xi_t^\tau=\mathds 1 _{\{ t \geq \tau  \}} \right\}$ into $\mathcal A$.
We finally observe that, thanks to the definition of $\tau^\xi$ and the independence of $U$ with respect to $W$ and $x_0$, we have
\begin{equation}\label{eq key relation randomized ST}
\mathbb{P} \big( \tau^\xi \leq t \mid \cF _{t}^{x_0,W}\big)
=\mathbb{P}\left( U \leq {\xi}_{t} \mid \cF _{t}^{x_0,W}\right)
=\int_{0}^{\xi_{t}} d u=\xi_{t}, 
\end{equation}
so that $\xi_t$ can be interpreted as the (conditional) probability of stopping before time $t$.

With slight abuse of notation, we will evaluate the payoff $J$ as in \eqref{eq} on randomized stopping times as well.
In particular, using \eqref{eq key relation randomized ST}, an application of the tower property and of the independence of $U$ with respect to $(W,x_0)$ allows to rewrite the payoff functional for any $(m,\mu) \in \mathcal M$ as
\begin{equation}
\label{eq functional J for tau xi}
J(\tau^\xi,m,\mu) = \E  \left[  \int_{0}^{T} f\left(s, X_{s}, m_{s}\right)\left(1-\xi_{s}\right) d s+\int_{[0,T]} g\left( X_{s,}, \mu\right) d \xi_{s} \right].
\end{equation}
Here, $\xi \in \mathcal A$ is the singular control associated to the randomized stopping time $\tau^{\xi}$, and for a generic continuous process $Y:=(Y_t)_t$ we have set
$
\int_{[0,T]} Y_s d \xi_{s} := Y_0 \xi_0 + \int_0^T Y_s d\xi_s.
$


\subsubsection{Consistency conditions for randomized strategies}

Equation \eqref{eq functional J for tau xi} provides the expression of the representative player's payoff functional when she is allowed to play a randomized stopping rule. We now derive a convenient form of the consistency conditions in \eqref{eq def equilibrium OS MFG} for randomized stopping times $\tau^\xi$ in terms of the related singular control $\xi \in \A$.

For any $\xi \in \A$, by using \eqref{eq key relation randomized ST}, we rewrite the first consistency condition in \eqref{eq def equilibrium OS MFG} as
\begin{equation}\label{eq consistency m singular control}
\begin{aligned}
\mathbb{P}\left(X_{t} \in A, t < \tau ^\xi \right) 
& =\mathbb{P}\left( X_{t} \in A, \xi_{t} \leq U \right) \\
& =\mathbb{E}\left[ \mathbb { E } \left[ \mathds{1}_{\left\{ X_{t} \in A, U \geqslant \xi_{t} \right\}} \mid \mathcal F^{x_0,W}_t  \right] \right]  \\
& =\mathbb{E}\left[ \int_{0}^{1} \mathds{1}_{\left\{ X_{t} \in A, \xi_{t} \leq u \right\}} d u \right] \\
& =\mathbb{E}\left[\int_{\xi_{t}}^{1} \mathds{1}_{\left\{X_{t} \in A\right\}} d q\right] \\
& =\mathbb{E}\left[\mathds{1}_{\left\{X_{t} \in A\right\}}\left(1-\xi_{t}\right)\right], \quad \forall A \in \mathcal B ( \R^d), \ t \in [0,T],
\end{aligned}
\end{equation}
where, again, independence of $U \sim \text{Un}(0,1)$ with respect to $(W,x_0)$ has been employed.

Similarly, for the second consistency condition in \eqref{eq def equilibrium OS MFG}, we find
$$
\begin{aligned}
\mathbb{P}\left( X_{\tau^{\xi}} \in B, \tau^{\xi}\leq t\right) 
& =\mathbb{E}\left[\mathds{1}_{\left\{X_{\tau^{\xi}} \in B, \tau^{\xi} \leq t\right\}}\right] \\
& =\mathbb{E}\left[\mathds{1}_{\left\{X_{\tau^{\xi}} \in B, \xi_{t}\geq U \right\}}\right], \quad \forall B \in \mathcal{B}(\R^d), \ t \in [0, T].
\end{aligned}
$$
Furthermore, letting
$$
{\tau^{\xi}}(u):=\inf \left\{ t \, | \,  \xi_{t}> u \right\}, \quad u \in[0,1], 
$$
we have $\tau ^\xi=\tau^{\xi}(U)$. 
Hence 
$$
\begin{aligned}
\mathbb{P}\left( X_{\tau^{\xi}} \in B, \tau^{\xi}\leq t\right) = \mathbb{E}\left[\mathds{1}_{\left\{X_{\tau^{\xi}} \in B, \xi_{t} \geq U\right\}}\right] 
= & \mathbb{E}\left[\mathbb{E}\left[\mathds{1}_{\left\{X_{\tau^\xi(U)} \in B, \xi_{t} \geq U\right\}} \mid \mathcal{F}_{t}^{x_0,W}\right]\right] \\
= & \mathbb{E}\left[\int_{0}^{1} \mathds{1}_{\left\{X_{\tau^{\xi}(u)} \in B, \xi_{t} \geq u\right\}} d u \right] \\
= & \mathbb{E}\left[\int_{0}^{1} \mathds{1}_{\left\{X_{\tau^{\xi}(u)} \in B, \tau^{\xi}(u) \leq t\right\}} d u \right]. 
\end{aligned}
$$
By using the change of variables formula (see Proposition 4.9 in Chapter 0 in \cite{revuz.yor.2013continuous}, among others), we finally obtain
\begin{equation}\label{eq consistency mu singular control}
\mathbb{E}\left[\mathds{1}_{\left\{X_{\tau^{\xi}} \in B, \xi_{t} \geq U\right\}}\right] 
 =
\mathbb{E}\left[\int_{[0,T]} \mathds{1}_{\left\{X
_{s} \in B, s \leq t\right\}} d \xi_{s}\right] 
 =\mathbb{E}\left[\int_{[0,t]} \mathds{1}_{\left\{X_{s} \in B \right\}} d \xi_{s}\right].
\end{equation}
Thus, if for $(\hat \xi, \hat m, \hat \mu) \in \A \times \mathcal M$ the triple $( \tau^{\hat \xi}, \hat m, \hat \mu)$ satisfy the consistency conditions in \eqref{eq def equilibrium OS MFG}, then by \eqref{eq consistency m singular control} and \eqref{eq consistency mu singular control} it follows that
\begin{equation}
    \label{eq consistency singular control}
    \begin{aligned}
    \hat m_{t}(A)&=\mathbb{E}\left[\mathds{1}_{\left\{X_{t} \in A\right\}} (1-\hat \xi_{t} )\right], \quad \forall A \in \mathcal{B}(\R^d), \  t \in[0, T],
    \\
\hat \mu(B \times [0, t]) & =\mathbb{E}\left[\int_{[0,t]} \mathds{1}_{\left\{X_{s} \in B \right\}} d \hat \xi_{s}\right], \quad \forall B \in \mathcal{B}(\R^d), \ t \in [0, T],
\end{aligned}
\end{equation}
which is the desired reformulation in terms of $\hat \xi$. 

\subsubsection{Entropy regularization and new equilibrium concept}

It is clear from \eqref{eq functional J for tau xi} that $\A \ni \xi \mapsto J(\xi,m,\mu) \in \R$ is linear, and it admits optimizers  of the "bang-bang" form $\xi_t=\mathds{1}_{t \geq \tau}$, for some strict optimal stopping $\tau\in \mathcal{T}$.
Hence, there is no guarantee that equilibria are of randomized type -- a phenomenon already noted in \cite{dianetti2024exploratory} in a single-agent setting without mean-field interaction.

To encourage randomized equilibrium strategies, we introduce an entropy-regularized version of the profit functional \( J \). Specifically, we consider a general entropy function \( \mathcal{E} \) that satisfies the following conditions.

{\begin{assumption}
    \label{assumption entropy}
   The entropy function $\mathcal E : [0,1] \to [0,\infty)$ is bounded, strictly concave, and has maximum in $(0,1)$. 
\end{assumption}}
A natural choice for the entropy function $\mathcal E$ is  the \emph{cumulative residual entropy}
(see \cite{rao.chen.vemuri.wang.2004cumulative}) which is given by $\mathcal E (z) = - z \log z$, employed in \cite{dianetti2024exploratory}. We also refer to \cite{han2023choquet} for examples related to Choquet integrals.

Given a temperature parameter $\lambda \geq 0$ that controls the level of exploration, for any $(m,\mu) \in \mathcal M$ we define the entropy regularized profit functional 
\begin{equation}
    \label{eq entropy regularized functional}
    J^\lambda(\xi ,  m, \mu) := \mathbb{E}\left[\int_{0}^{T}\left( f (t,X_{t}, m_{t} ) \left(1-\xi_{t}\right) + \lambda \mathcal E \left( \xi_{t}\right) \right) d t+\int_{[0,T]} g (
X_{t}, \mu) d \xi_{t}\right], \ \ \xi \in \mathcal A.
\end{equation}
A control  $ \xi^* \in \A$ is optimal  for $(m,\mu)$ if $J^\lambda( \xi^* ,  m, \mu) \geq J^\lambda(\xi ,  m, \mu)$ for any other $\xi \in \A$.
The existence of an optimal control follows by classical arguments (see e.g.\ \cite{li&zitkovic2017}), after noticing that the function $ \xi \mapsto J^\lambda(\xi , m, \mu) $ is  concave. 
In the case $\lambda>0$, such a concavity is strict, and the optimizer is unique.

The entropy $\mathcal E$ is nonnegative and achieves its highest values when the probability $\xi_t$ is near the maximum point of $\mathcal E$. 
Since the maximum point of $\mathcal E$ is an internal point of $[0,1]$, this incentivizes the use of randomized stopping times.
Indeed, in the case $\lambda >0$, it can be seen that the unique optimal control is not of bang-bang type (see also \cite{dianetti2024exploratory}).

In light of the consistency conditions derived in \eqref{eq consistency singular control}, we introduce the following notion of equilibrium to the entropy regularized MFG with singular controls ($\lambda$-SC-MFG, in short).
\begin{definition}[Entropy regularized MFG equilibrium with singular controls]
An equilibrium to the $\lambda$-SC-MFG problem  is a triple $(\hat \xi, \hat m, \hat \mu)$ such that
\begin{equation}\label{eq def equilibrium SC lambda MFG}
\begin{cases}
& \begin{matrix} \hat \xi \in \argmax_{\xi \in \mathcal{A}} J^\lambda(\xi , \hat m,\hat \mu),\end{matrix}  \\ 
& \hat m_{t}(A)=\mathbb{E}\left[ \mathds{1}_{ \{X_{t} \in A \}}(1-\hat \xi_t)\right], \quad \forall A \in \mathcal{B}\left(\R^d\right), \  t \in[0, T],\\
& 
\hat \mu(B \times [0, t]) =\mathbb{E}\left[\int_{[0,t]} \mathds{1}_{\left\{X_{s} \in B\right\}} d \hat \xi_{s}\right], \quad \forall B \in \mathcal{B}\left(\R^d\right), \  t \in[0, T]. 
\end{cases}
\end{equation}
\end{definition}

\begin{remark}
 From the mathematical point of view, we notice that:
 \begin{enumerate}
     \item 
 When $\lambda = 0$, we have $J^0(\xi ,  m, \mu)= J(\tau^\xi ,  m, \mu)$, so that  the problem \eqref{eq def equilibrium SC lambda MFG} is just the definition of equilibrium for the OS-MFG with mixed stopping strategies. Such a game is very closely related to those studied in \cite{bouveret.dumitrescu.tankov.20,dumitrescu2021control,dumitrescu.leutscher.tankov.2022linear}, using a linear programming approach. A rigorous relation between   {mixed stopping strategies}  and the occupation measures arising in the linear-programming approach is provided in \cite{roxanaetal}, where mixed strategies are expressed in terms of probability  kernels. 
   \item When $\lambda >0$, the problem \eqref{eq def equilibrium SC lambda MFG} is a new type of MFG with singular controls which has not been considered in the  literature. In particular, such a model cannot be embedded into the (general) results obtained in \cite{Campietal, denkert.horst.2023extended,  fu2023extended, Fu&Horst17} because of the structure of our consistency condition \eqref{eq consistency singular control} involving the integral with respect to the $d\xi$-random measure. 
 \end{enumerate}
\end{remark}
\begin{remark}
For any fixed \( (m, \mu) \in \mathcal{M} \), the optimal value and the corresponding optimal strategy of the control problem remain stable as \( \lambda \to 0 \) (see \cite{dianetti2024exploratory} for details). However, due to the well-known sensitivity of Nash equilibria to model perturbations, it is a nontrivial question whether the equilibria themselves remain stable in the limit \( \lambda \to 0 \).

From an applied and learning perspective, this question is particularly important, as it provides theoretical guarantees on the closeness between the equilibria of the entropy-regularized problem \eqref{eq def equilibrium SC lambda MFG} -- which we aim to learn -- and those of the original problem \eqref{eq def equilibrium OS MFG}. We address this issue and establish a positive result in Theorem~\ref{theorem qualitative stability in lambda}.

\end{remark}

\section{Analysis of entropy regularized OS-MFGs}
\label{sec:existence}

The aim of this section is to study in detail the entropy regularized $\lambda$-SC-MFG \eqref{eq def equilibrium SC lambda MFG} and its relations, as the temperature parameter $\lambda \to 0$, with our original OS-MFG \eqref{eq def equilibrium OS MFG}.

\subsection{Preliminary results}
We first endow the set of singular controls $\mathcal A$ with a topology. 
Notice that $\mathcal A$ is a closed convex subset of the Hilbert space $\mathbb L ^2 (\Omega \times [0,T])$,  i.e. the set of $\mathcal{F} \otimes \mathcal{B}([0,T])$-measurable functions $\ell$ such that $\mathbb{E}[\int_0^T \ell^2_tdt] <\infty$: 
indeed, if a sequence $(\xi^n)_n \subset \A$ converges to some $\xi \in \mathbb L ^2 (\Omega \times [0,T])$, it is always possible (e.g., via Lemmas 4.5 and 4.6 in \cite{Karatzas&Shreve84}) to find a c\`adl\`ag monotone nondecreasing modification of $\xi$, so that $\xi \in \A$.
Thus, consider on $\A$  the topology of weak convergence (in the Hilbert sense) of $\mathbb L ^2 (\Omega \times [0,T] )$, and  denote the convergence with $\xi^n \to \xi$.
Since the set $\A$ is convex and closed for the strong topology, it is closed for the weak topology. 
Therefore, since $\A$ is also bounded, it is compact when endowed with the weak topology.

We define the consistency map
$ \Gamma : \mathcal A  \to \mathcal M$ as the map $\Gamma (\xi) := (m^\xi, \mu^\xi)$ with
\begin{equation}\label{eq consistency map} 
\begin{aligned}
& m^\xi_{t}(A):=\mathbb{E}\left[\mathds{1} _{\left\{X_{t} \in A\right\}} \left(1-\xi_{t}\right)\right], \quad \forall A \in \mathcal{B}(\R^d), \quad t \in[0, T], \\
& \mu^\xi(B \times [0, t])
:=\mathbb{E}\left[\int_{[0,t]} \mathds{1}_{\left\{X_{t} \in B\right\}} d \xi_{t}\right],  \quad  \forall B \in \mathcal{B}(\R^d), \quad t \in[0, T].
\end{aligned}
\end{equation}

For our subsequent analysis, in the next three lemmas we discuss boundedness, closedness and  compactness of the set $\Gamma (\A)$.
\begin{lemma}
    \label{lemma a priori estimates}
    The consistency map $\Gamma$ is bounded; that is,
    there exists a constant $C_p >0$ such that
    $$
    \int _{\R^d} |z|^p m_t^\xi(dz) \leq C_p ,\ \ \forall t \in [0,T],
    \and
    \int _{[0,T] \times \R^d} (t^p+|z|^p) \mu^\xi(dt,dz) \leq C_p,
    $$
    for any $ \xi \in \A$.
\end{lemma}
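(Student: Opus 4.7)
The plan is to prove each bound separately by reducing the integrals against $m^\xi_t$ and $\mu^\xi$ back to expectations involving $X$ and $\xi$ via the definitions in \eqref{eq consistency map}, and then using the a priori moment estimate \eqref{eq estimate SDE} together with the facts $\xi_t \in [0,1]$ and $\xi_T - \xi_{0-}=1$.

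For the first bound, I would apply a standard monotone class/approximation argument to pass from the definition $m^\xi_t(A)=\mathbb E[\mathds 1_{\{X_t \in A\}}(1-\xi_t)]$ to the identity $\int_{\R^d} \varphi(z)\, m^\xi_t(dz) = \mathbb E[\varphi(X_t)(1-\xi_t)]$ for nonnegative Borel $\varphi$. Taking $\varphi(z)=|z|^p$ and using that $0\leq 1-\xi_t\leq 1$ yields
$$
\int_{\R^d}|z|^p\, m^\xi_t(dz) = \mathbb E\bigl[|X_t|^p(1-\xi_t)\bigr] \leq \mathbb E\Bigl[\sup_{s\in[0,T]}|X_s|^p\Bigr],
$$
which is finite and independent of $\xi$ and $t$ thanks to \eqref{eq estimate SDE}.

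For the second bound, the analogous extension gives $\int_{[0,T]\times\R^d}\psi(s,z)\,\mu^\xi(ds,dz)=\mathbb E\bigl[\int_{[0,T]}\psi(s,X_s)\,d\xi_s\bigr]$ for nonnegative Borel $\psi$. Taking $\psi(s,z)=s^p+|z|^p$ and using $s\leq T$ together with $\int_{[0,T]} d\xi_s = \xi_T - \xi_{0-}=1$, I obtain
$$
\int_{[0,T]}s^p\,d\xi_s \leq T^p \and \int_{[0,T]}|X_s|^p\,d\xi_s \leq \sup_{s\in[0,T]}|X_s|^p,
$$
so that
$$
\int_{[0,T]\times\R^d}(t^p+|z|^p)\,\mu^\xi(dt,dz) \leq T^p + \mathbb E\Bigl[\sup_{s\in[0,T]}|X_s|^p\Bigr].
$$
Setting $C_p$ as the maximum of the two right-hand sides gives the desired uniform constant.

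There is no genuine obstacle here: every step is a direct consequence of the definition of $\Gamma$, the fact that $\xi$ is $[0,1]$-valued and nondecreasing with $\xi_{0-}=0$ and $\xi_T=1$, and the standard $L^p$-estimate \eqref{eq estimate SDE} for the SDE under Assumption \ref{assumption}. The only minor care is the monotone-class step used to test the measures against $|z|^p$ and $t^p+|z|^p$, which is standard.
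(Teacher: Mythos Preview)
Your proof is correct and follows essentially the same approach as the paper: both arguments reduce the integrals to expectations via the definition of $\Gamma$, use $0\leq \xi_t\leq 1$ (respectively $\int_{[0,T]} d\xi_s = 1$) to bound pathwise, and conclude with the moment estimate \eqref{eq estimate SDE}. The only difference is that you make explicit the monotone-class extension from indicators to $|z|^p$ and $t^p+|z|^p$, which the paper takes for granted.
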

\begin{proof}
By using the boundedness condition $0\leq \xi_t \leq 1$,  we obtain
$$
\int _{\R^d} |z|^p m_t^\xi(dz) = \E [ |X_t|^p (1-\xi_t)] \leq \E \Big[ \sup_{t \in [0,T] } |X_t|^p \Big] \leq C_p,   
$$
for a suitable constant $C_p$ (see \eqref{eq estimate SDE}). This shows the first estimate. 
For the second estimate, again by boundedness of $\xi$, we have
$$
\int _{[0,T] \times \R^d} (t^p+|z|^p) \mu^\xi(dt,dz) = \E \left[ \int _{[0,T]} (t^p+|X_t|^p) d\xi_t \right] \leq T^p+\E \Big[ \sup_{t \in [0,T] } |X_t|^p \Big] \leq C_p,
$$
unless to take a larger $C_p$.
\end{proof}
\begin{lemma}
    \label{lemma closedness Gamma A}
   The consistency map $\Gamma: \A \to \mathcal M$ has closed graph.
\end{lemma}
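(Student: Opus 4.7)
The plan is to take an arbitrary sequence $(\xi^n)_n \subset \A$ converging weakly in $L^2(\Omega \times [0,T])$ to some $\xi \in \A$ and satisfying $\Gamma(\xi^n) \to (m,\mu)$ in $\mathcal M$, and to show that $(m,\mu) = \Gamma(\xi) = (m^\xi, \mu^\xi)$. Both identifications proceed by testing against suitable smooth functions, each time reducing the pairing to an $L^2(\Omega\times[0,T])$ inner product against a bounded, $\mathbb F^{x_0,W}$-adapted process to which the weak $L^2$ convergence of $(\xi^n)$ can be applied.

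For the marginal flow, fix a bounded Lipschitz $\varphi : \R^d \to \R$ and $\psi \in C([0,T])$. Unfolding the definition of $m^{\xi^n}_t$ and using Fubini,
\[
\int_0^T \psi(t) \int_{\R^d} \varphi(z)\, m^{\xi^n}_t(dz)\, dt = \E\Big[\int_0^T \psi(t)\varphi(X_t)(1-\xi^n_t)\,dt\Big].
\]
The kernel $(t,\omega)\mapsto \psi(t)\varphi(X_t)$ is bounded, hence in $L^2(\Omega \times [0,T])$, so weak $L^2$ convergence produces the limit $\int_0^T \psi(t) \int \varphi(z)\, m^\xi_t(dz)\, dt$ on the right-hand side. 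On the left, $m^{\xi^n} \to m$ in $M_p$ combined with \eqref{eq conv in dt after conv in measure} gives, along a subsequence, $d'_p(m^{\xi^n}_t, m_t) \to 0$ for a.e.\ $t$; Kantorovich duality for $d'_1$ together with the uniform moment bound of Lemma \ref{lemma a priori estimates} and dominated convergence yield the limit $\int_0^T \psi(t) \int \varphi(z)\, m_t(dz)\, dt$. Since $\psi,\varphi$ range over a separating class, $m_t = m^\xi_t$ for a.e.\ $t$.

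For the joint law $\mu$ the obstacle is that $\xi \mapsto \int \cdot\, d\xi$ is stable only under strong $L^2$ convergence, whereas only the weak one is available. The remedy is to convert the $d\xi^n$-pairing into a Lebesgue pairing via integration by parts and It\^o's formula. For tensor-product test functions $\Phi(s,z) = \psi(s)\varphi(z)$ with $\psi \in C^1([0,T])$ and $\varphi \in C^2_c(\R^d)$, the process $Y_s := \Phi(s,X_s)$ is a continuous semimartingale with $dY_s = \mathcal L\Phi(s,X_s)\,ds + \nabla_x \Phi \cdot \sigma(s,X_s)\,dW_s$, where $\mathcal L\Phi := \partial_s \Phi + \nabla_x \Phi \cdot b + \tfrac12 \tr(\sigma\sigma^\top \nabla_x^2 \Phi)$ is bounded because $\varphi$ has compact support. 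The c\`adl\`ag integration-by-parts formula, using $\xi^n_{0-} = 0$, $\xi^n_T = 1$ and continuity of $X$, yields
\[
\int_{[0,T]} \Phi(s,X_s)\, d\xi^n_s = \Phi(T,X_T) - \int_0^T \xi^n_{s-}\, dY_s,
\]
whose expectation (the stochastic term is a true martingale by boundedness of the integrand) reads
\[
\int \Phi\, d\mu^{\xi^n} = \E[\Phi(T,X_T)] - \E\Big[\int_0^T \xi^n_s\, \mathcal L \Phi(s,X_s)\,ds\Big].
\]
Since $\mathcal L\Phi(s,X_s)$ is bounded and thus in $L^2(\Omega \times [0,T])$, weak $L^2$ convergence of $(\xi^n)$ passes to the limit, and the analogous identity with $\xi$ in place of $\xi^n$ on the right recovers $\int \Phi\, d\mu^\xi$. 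Confronting this with $\int \Phi\, d\mu^{\xi^n} \to \int \Phi\, d\mu$ supplied by the $\mathcal P_p$ convergence yields $\int \Phi\, d\mu = \int \Phi\, d\mu^\xi$ for every such $\Phi$; since the tensor products form a measure-determining class, $\mu = \mu^\xi$.

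The hardest step is the second identification; the It\^o-based integration-by-parts device above is its crucial ingredient, as it converts the random-measure pairing, which is discontinuous with respect to weak $L^2$ convergence, into one to which weak $L^2$ convergence does apply. The remaining ingredients — integrability of the stochastic integrand, the equivalence $\xi^n_{s-} = \xi^n_s$ up to $ds$-null sets, and the density of tensor-product test functions in a measure-determining class — are routine.
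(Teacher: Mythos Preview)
Your proof is correct and takes a genuinely different route from the paper's. The paper does not work with the weak $L^2$ pairing directly for the $\mu$-identification; instead it invokes a Koml\'os-type result (Lemma~3.5 in \cite{K}) to extract, from the weakly convergent $(\xi^{n_k})_k$, a subsequence whose Ces\`aro means $\zeta^j = \frac{1}{j}\sum_{k=1}^{j}\xi^{n_k}$ satisfy $\int_{[0,T]} \phi_t\, d\zeta^j_t \to \int_{[0,T]}\phi_t\, d\xi_t$ $\mathbb P$-a.s.\ for every $\phi \in C_b([0,T])$. This almost-sure convergence of the random measures, combined with linearity of $\Gamma$ in $\xi$, is strong enough to pass to the limit in both consistency conditions by dominated convergence, without any integration by parts. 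Your approach instead exploits the SDE structure of $X$: by applying It\^o's formula to $\Phi(s,X_s)$ with $C^{1,2}$ tensor-product test functions of compact support, you convert the $d\xi^n$-integral into a $ds$-integral against the bounded process $\mathcal L\Phi(s,X_s)$, to which the weak $L^2$ convergence of $(\xi^n)_n$ applies verbatim. The paper's argument is more robust in that it does not rely on the semimartingale decomposition of $X$ and handles arbitrary continuous integrands at once, whereas yours is more self-contained and avoids the auxiliary convex-combination machinery; both are fully adequate here since the dynamics of $X$ are given explicitly.
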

\begin{proof}
We have to show that, if $(m^n,\mu^n) \in \Gamma(\mathcal{A})$ converges to $(m,\mu)$, then $(m,\mu) \in \Gamma(\mathcal{A})$. Let $\xi^n \in \mathcal{A}$ such that $(m^n,\mu^n)=(m^{\xi^n},\mu^{\xi^n})$. By compactness of the set $\mathcal{A}$, there exists a subsequence $\xi^{n_k}$ which converges weakly to $\xi^\star$.
Since $\sup_{t \geq 0} \xi^{n_k}_t \leq 1$, we can employ  Lemma 3.5 in \cite{K} in order to find a subsequence (not relabelled)  $(n_k)_k$ and a limit point $\xi \in \mathcal A$ such that, setting
$
\zeta^j_t := \frac1j \sum_{k=1}^j \xi^{n_k}_t,
$
we have
$$
\int_{[0,T]}
\phi_t d \zeta^j_t \to \int_{[0,T]} \phi_t d \xi_t,  \ \mathbb P \text{-a.s., as $j \to \infty$, for any $\phi \in C_b([0,T])$.}
$$
One can easily check that $\xi=\xi^\star$.
Therefore, it holds
$$
\begin{aligned}
\int_{\mathbb{R}^d}\varphi(x)m_{t}(dx)
&= \lim_n  \frac1N \sum _{k=1}^N \int_{\mathbb{R}^d}\varphi(x)m_{t}^{n_k}(dx)  \\
&= \lim_n  \frac1N \sum _{k=1}^N
\mathbb{E}\left[\varphi ( X_{t} ) \left(1-\xi^{n_k}_{t}\right)\right] \\
&= \lim_n  
\mathbb{E}\left[\varphi ( X_{t} ) \left(1-\zeta^{n}_{t}\right)\right] \\
& = \mathbb{E}\left[\varphi ( X_{t} ) \left(1-\xi_{t}\right)\right]
, \quad \forall \  t \in[0, T], \quad \varphi \in C_b (\mathbb R ^d ), 
\end{aligned}
$$
and
$$
\begin{aligned}
\int_{[0,T] \times \mathbb{R}^d}\varphi(t,x)\mu(dt,dx)
&= \lim_n  \frac1N \sum _{k=1}^N \mathbb{E}\bigg[\int_{[0,T]} \varphi (t, X_{t} ) d \xi^{n_k}_{t}\bigg]  \\
&= \lim_n  
\mathbb{E}\bigg[\int_{[0,T]} \varphi (t, X_{t} ) d \zeta^N_{t}\bigg] \\
& = \mathbb{E}\bigg[\int_{[0,T]} \varphi (t, X_{t} ) d \xi_{t}\bigg]
, \quad \forall \   \varphi \in C_b ([0,T] \times \mathbb R ^d ),
\end{aligned}
$$
from which we conclude that $(m,\mu)=\Gamma(\xi)$, thus completing the proof.
\end{proof}

\begin{lemma}
    \label{lemma compactness Gamma A}
   The set $\Gamma (\A) \subset \mathcal M$ is compact.
\end{lemma}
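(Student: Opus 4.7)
The plan is to show that $\Gamma(\mathcal A)$ is sequentially compact; since $\mathcal M$ is metrizable under $d_p^M \times d_p$, this suffices. Given an arbitrary sequence $(m^n, \mu^n) = \Gamma(\xi^n)$ in $\Gamma(\mathcal A)$, I would first invoke the weak $L^2$-compactness of $\mathcal A$ (established just before Lemma \ref{lemma a priori estimates}) to extract a subsequence, still indexed by $n$, such that $\xi^n$ converges weakly in $L^2(\Omega \times [0,T])$ to some $\xi^\star \in \mathcal A$. This will serve as the ``witness'' control that Lemma \ref{lemma closedness Gamma A} can later attach to the limit of the images.

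Next, I would argue that the image sequence itself admits a convergent subsequence in $\mathcal M$, relying on the uniform moment estimates of Lemma \ref{lemma a priori estimates}. For the $\mathcal P_p$-component, the uniform bound $\int (t^p + |z|^p)\,d\mu^n \leq C_p$ yields tightness of $\{\mu^n\}$ in $\mathcal P([0,T] \times \mathbb R^d)$. To upgrade tightness to relative compactness in the Wasserstein metric $d_p$, I would exploit the explicit representation $\mu^n(dt,dz) = \mathbb E[\delta_{X_t}(dz)\,d\xi^n_t]$ together with the SDE estimate $\mathbb E[\sup_{t \in [0,T]} |X_t|^p] < \infty$ from \eqref{eq estimate SDE} to show uniform integrability of $(t,z) \mapsto t^p + |z|^p$ under the $\mu^n$'s. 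For the $M_p$-component, the analogous in-$t$ estimate $\int |z|^p\,m^n_t(dz) \leq C_p$ gives tightness of $\{m^n_t\}$ for every $t$, and a diagonal extraction combined with the same uniform integrability argument produces $d_p'$-convergence of $m^n_t$ to some $m^\star_t$ for a.e.\ $t \in [0,T]$, which by dominated convergence is enough for convergence in the convergence-in-measure metric $d_p^M$.

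Finally, with $(m^n, \mu^n) \to (m^\star, \mu^\star)$ in $\mathcal M$ along the extracted subsequence and $(m^n, \mu^n) \in \Gamma(\mathcal A)$, the closed-graph property in Lemma \ref{lemma closedness Gamma A} yields $(m^\star, \mu^\star) \in \Gamma(\mathcal A)$, which closes the sequential-compactness argument.

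The main obstacle I anticipate is the second step: while Lemma \ref{lemma a priori estimates} immediately delivers tightness, upgrading this to genuine convergence in the Wasserstein metric $d_p$ (respectively the convergence-in-measure metric $d_p^M$) requires a careful uniform-integrability argument on the $p$-th moments. The natural route is to use the specific representation of $\Gamma(\xi)$ in terms of the trajectories of $X$ and the control $\xi \in [0,1]$, together with the $L^p$-bound on $\sup_{t} |X_t|$; if a bare $p$-moment is not enough, one might need to invoke a Cesàro-means extraction in the spirit of Lemma \ref{lemma closedness Gamma A} to get the pointwise-in-$t$ compactness information that $M_p$ requires, or alternatively to absorb the potential loss of mass into the cemetery point $\partial$ using the $d'_p$ construction of Section \ref{sec:preliminaries}.
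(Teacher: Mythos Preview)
Your argument for the $\mu$-component is sound: tightness on $[0,T]\times\R^d$ follows from the moment bound of Lemma~\ref{lemma a priori estimates}, and uniform integrability of $|z|^p$ holds because $\int_{\{|z|>R\}}|z|^p\,d\mu^{\xi^n}\le\E\big[\sup_t|X_t|^p\,\mathds 1_{\{\sup_t|X_t|>R\}}\big]$, which vanishes as $R\to\infty$ uniformly in $n$. This delivers relative compactness in $d_p$.

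The gap is in the $M_p$-component, and it is \emph{not} a uniform-integrability issue. A diagonal extraction yields $d_p'$-convergence of $m^n_t$ only along a \emph{countable} set of times; passing from there to convergence for a.e.\ $t$ requires some regularity of $t\mapsto m^n_t$ that is uniform in $n$, and moment bounds alone do not supply it. The obstruction is intrinsic to the convergence-in-measure topology: even when the target is a fixed compact metric space $K$, the space of measurable maps $[0,T]\to K$ with this topology is not sequentially compact (oscillating step functions have no convergent subsequence). Your proposed remedies do not help: a Ces\`aro extraction in the spirit of Lemma~\ref{lemma closedness Gamma A} would show that the averages $\tfrac1j\sum_{k\le j}m^{n_k}$ converge, not a subsequence of $(m^n)$ itself; and the cemetery point $\partial$ is already built into $d_p'$ and absorbs loss of mass, not oscillation in $t$.

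The paper obtains the missing time-regularity from a different source. It introduces the set $\mathcal M^\star$ of pairs $(m,\mu)$ satisfying the Fokker--Planck constraint
\[
\int u\,d\mu=\int u(0,\cdot)\,dm_0+\int_0^T\!\!\int(\partial_t+\mathcal L)u\,dm_t\,dt\qquad\forall\,u\in C_b^{1,2},
\]
verifies via It\^o's formula that $\Gamma(\A)\subset\mathcal M^\star$, and then invokes the compactness of $\mathcal M^\star$ established in Theorem~2.10 of \cite{dumitrescu.leutscher.tankov.2022linear}. Since $\Gamma(\A)$ is closed by Lemma~\ref{lemma closedness Gamma A}, it is compact as a closed subset of a compact set. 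The Fokker--Planck relation couples the evolution $t\mapsto m_t$ to the exit measure $\mu$ through the generator $\mathcal L$, and it is precisely this coupling---absent from a purely moment-based approach---that the cited compactness theorem exploits.
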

\begin{proof}

Let us first introduce the set $\mathcal{M}^\star$
of pairs $(\mu, m) \in \mathcal{P}_p([0, T]\times \mathbb{R}^d)\times M_p$, such that for all $u\in C_b^{1, 2}([0, T]\times \mathbb{R}^d)$,
\begin{equation*}
\int_{[0, T]\times {\mathbb{R}^d}} u(t, x)\mu(dt, dx)= \int_{\mathbb{R}^d} u(0, x)m_0(dx) + \int_0^T \int_{_{\mathbb{R}^d}} \left(\partial_t u +\mathcal L u\right) (t, x)m_t(dx)dt,
\end{equation*}
where $\mathcal L u(t, x):= \nabla_x u(t, x)^\top b(t, x) + \frac{1}{2}Tr[\sigma^{\top}(H_xu)\sigma]$, with $\nabla_x u:=(\partial_{x_1}u, \ldots, \partial_{x_d}u)$, $H_xu$ the Hessian matrix of $u$ with respect to $x$ and $Tr$ the trace operator. We also denote by $m_0:=\mathcal{L}(x_0)$.
For a given $(\xi_t) \in \mathcal{A}$, by applying Ito's formula, it can be shown that the associated pair $(m^{\xi},\mu^{\xi})=\Gamma(\xi)$  belongs to $\mathcal{M}^\star$. Therefore, $\Gamma(\mathcal{A}) \subset \mathcal{M}^{\star}$.
By Theorem 2.10 in \cite{dumitrescu.leutscher.tankov.2022linear}, the set $\mathcal{M}^\star$ is compact for the topology of the convergence in measure.\\
Furthermore, thanks to Lemma \ref{lemma closedness Gamma A}, the set $\Gamma(\mathcal{A})$ is a closed subset of $\mathcal{M}^\star$. It thus follows that it is compact.
\end{proof}
\begin{remark} Under some additional assumptions (e.g. uniform ellipticity) it can be actually shown that $\Gamma(\mathcal{A})=\mathcal{M}^*$ (see \cite{roxanaetal}). 
\end{remark}

Next, for $(m,\mu,\lambda)  \in \mathcal M \times [0,\infty)$, define the best response map $R_\lambda (m,\mu) \subset \mathcal A$ as
\begin{equation}
    \label{eq best response map}
    R_\lambda (m,\mu) := \argmax_{\xi \in \mathcal{A}} J^\lambda(\xi ,  m, \mu).
\end{equation}
For any $(m,\mu,\lambda)  \in \mathcal M \times [0,\infty)$, via classical concavity arguments (see e.g. \cite{li&zitkovic2017}), we have that $R_\lambda (m,\mu)$ is nonempty.
Notice that, when $\lambda >0$, the strict concavity of $\xi \mapsto J^\lambda(\xi ,  m, \mu)$ implies that $R_\lambda (m,\mu)$ is actually a singleton. 
In this case, with abuse of notation we will indicate with $ R_\lambda (m,\mu) = (R_\lambda (m,\mu)_t)_t$ both the singleton and its only element.

The best response map has closed graph, and  enjoys the following continuity properties.
\begin{lemma}
    \label{lemma continuity best response}
     Under Assumptions \ref{assumption}, the following statements hold true:
    \begin{enumerate}
        \item\label{lemma continuity best response: one}  The best response map $(m,\mu,\lambda) \mapsto R_\lambda (m,\mu) $ is continuous in $\Gamma(\A) \times (0,\infty).$
        \item\label{lemma continuity best response: two} For any  sequence $(m^n,\mu^n,\lambda^n)_n \subset \Gamma(\A) \times [0,\infty)$ converging to $(m,\mu,0)$, and any $\xi^n \in R_{\lambda^{n}}(m^{n},\mu^{n})$, there exists a subsequence $(n_k)_k$ and $\xi^* \in R _0 (m,\mu)$ such that $\xi^{n_k} \to\xi^*$. 
    \end{enumerate}
\end{lemma}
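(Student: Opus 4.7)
The approach combines weak $L^2$-compactness of $\mathcal A$, concavity (strict when $\lambda > 0$) of $\xi \mapsto J^\lambda(\xi, m, \mu)$, and joint continuity of $J^\lambda$ in all four arguments. I would treat both parts simultaneously by extracting weakly convergent subsequences of $(\xi^n)$ and passing to the limit in the defining optimality inequality; uniqueness for $\lambda > 0$ then upgrades subsequence convergence into full convergence, which is exactly the distinction between parts (\ref{lemma continuity best response: one}) and (\ref{lemma continuity best response: two}).

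The central analytic step is the following joint semicontinuity property: given $(m^n, \mu^n, \lambda^n) \to (m, \mu, \lambda)$ in $\mathcal M \times [0, \infty)$ and $\xi^n \to \xi^\star$ in $\mathcal A$ (weakly in $L^2(\Omega \times [0, T])$), one has
$$
\limsup_n J^{\lambda^n}(\xi^n, m^n, \mu^n) \leq J^\lambda(\xi^\star, m, \mu), \qquad \lim_n J^{\lambda^n}(\eta, m^n, \mu^n) = J^\lambda(\eta, m, \mu) \text{ for every fixed } \eta \in \mathcal A.
$$
Decomposing $J^\lambda = I_1 + \lambda I_2 + I_3$ into the three natural pieces and using the uniform bound $\xi_t \in [0,1]$, the growth conditions of Assumption~\ref{assumption}, the modulus $w_g$ for $g$, the boundedness of $\mathcal E$, the moment estimate \eqref{eq estimate SDE} together with the a priori bound of Lemma~\ref{lemma a priori estimates}, and the fact that $d^M_p$-convergence of $m^n \to m$ yields $dt$-a.e.\ convergence of $d_p^\prime(m^n_t, m_t) \to 0$ along a subsequence, one readily obtains the \emph{uniform-in-$\xi$} bound $\sup_{\xi \in \mathcal A} | J^{\lambda^n}(\xi, m^n, \mu^n) - J^\lambda(\xi, m, \mu) | \to 0$. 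This reduces the limsup inequality to the weak upper semicontinuity of the fixed-parameter map $\xi \mapsto J^\lambda(\xi, m, \mu)$ on $\mathcal A$. The latter follows from concavity plus strong $L^2$-continuity through Mazur's theorem: the $I_1$-term and the entropy term are manifestly strongly continuous, while the Stieltjes functional $I_3(\xi, \mu) = \E[\int_{[0,T]} g(X_t, \mu)\, d\xi_t]$ is handled via Helly's theorem, observing that pointwise $dt$-a.e.\ convergence of the monotone c\`adl\`ag paths (which holds along a further subsequence of any strongly $L^2$-convergent sequence) implies weak convergence of the induced Stieltjes measures on $[0, T]$; dominated convergence with the integrable envelope $K(1 + \sup_t |X_t|^p)$ then completes the argument.

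With this semicontinuity in hand, the conclusion is immediate. For any subsequence of $(\xi^n)$, weak compactness of $\mathcal A$ produces a further subsequence $\xi^{n_k} \to \xi^\star$, and passing to the limit in $J^{\lambda^{n_k}}(\xi^{n_k}, m^{n_k}, \mu^{n_k}) \geq J^{\lambda^{n_k}}(\eta, m^{n_k}, \mu^{n_k})$ for arbitrary $\eta \in \mathcal A$ yields $J^\lambda(\xi^\star, m, \mu) \geq J^\lambda(\eta, m, \mu)$, i.e.\ $\xi^\star \in R_\lambda(m, \mu)$. For part~(\ref{lemma continuity best response: one}), strict concavity of $J^\lambda$ with $\lambda > 0$ forces $R_\lambda(m, \mu) = \{\xi^\star\}$, so every subsequence has a further subsequence converging to the \emph{same} limit and the full sequence converges. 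Part~(\ref{lemma continuity best response: two}) is exactly the subsequence version of this argument at $\lambda = 0$, where uniqueness of the limit is lost. The main obstacle I anticipate is the strong $L^2$-continuity of the Stieltjes functional $I_3$: its treatment essentially exploits the monotone c\`adl\`ag structure of $\mathcal A$ via Helly, rather than any $L^2$-type pairing, and is the only place where one really uses that $\mathcal A$ consists of bounded-variation processes and not merely of a bounded subset of $L^2$.
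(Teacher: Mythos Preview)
Your proposal is correct and follows essentially the same strategy as the paper: both first reduce to the uniform-in-$\xi$ limit $\sup_{\xi \in \mathcal A}|J^{\lambda^n}(\xi, m^n, \mu^n) - J^\lambda(\xi, m, \mu)| \to 0$ (via the growth conditions, the modulus $w_g$, boundedness of $\mathcal E$, and dominated convergence exactly as you outline), and then establish weak upper semicontinuity of $\xi \mapsto J^\lambda(\xi, m, \mu)$ through concavity and passage to convex combinations, with uniqueness at $\lambda>0$ upgrading subsequential convergence to full convergence. The only technical difference is in the handling of the Stieltjes term $I_3$: the paper invokes a Koml\'os-type lemma of Kabanov (Lemma 3.5 in \cite{K}) to directly obtain C\'esaro means $\zeta^j$ whose Stieltjes integrals converge pathwise, whereas you reach the same conclusion through Mazur's theorem (strong $L^2$ convergence of convex combinations) followed by a Helly argument on the monotone c\`adl\`ag paths---both routes are valid and yield the identical limit inequality.
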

\begin{proof}
Consider a sequence $(m^n,\mu^n,\lambda^n)_n$ converging to $(m,\mu,\lambda)$ and take $\xi^n \in R_{\lambda^n}(m^n,\mu^n)$.
By compactness of $\A$ in the weak topology,  for any subsequence of indexes  $(n_k)_k$, we can find a further subsequence (not relabeled)  and a limit point $\xi^*$ such that $\xi^{n_k} \to \xi^*$ weakly in $\mathbb L ^2 (\Omega \times [0,T] )$ as $k \to \infty$.
    We need to show that $\xi^*$ is optimal for $J^\lambda (\cdot, m , \mu)$.
    To this end, notice that, by optimality of $\xi^{n_k}$ for $J^{\lambda^{n_k}} (\cdot, m^{n_k} , \mu^{n_k})$, we have 
    \begin{equation}
        \label{eq lemma continuity optimality prelimit}
        J^{\lambda^{n_k}} (\xi^{n_k}, m^{n_k} , \mu^{n_k}) \geq J^{\lambda^{n_k}} (\xi, m^{n_k} , \mu^{n_k}), \quad \forall \xi \in \A .
    \end{equation}
    In order to discuss limits as $n_k \to \infty$ in the previous inequality, we first show that
    \begin{equation}
        \label{eq lemma continuity zero limit}
        \lim_k \sup_{\xi\in\A} | J^{\lambda^{n_k}} (\xi, m^{n_k} , \mu^{n_k}) - J^{\lambda } (\xi, m  , \mu)|=0.
    \end{equation}
    Using the boundedness of $\mathcal E$ and of the controls $\xi$, we first find
    $$
    \begin{aligned}
    &\sup_{\xi\in\A} |J^{\lambda^{n_k}} (\xi, m^{n_k} , \mu^{n_k}) - J^{\lambda } (\xi, m  , \mu) | \\
    & \quad \leq \sup_{\xi\in\A} 
    \mathbb{E}\left[\int_{0}^{T}\left| f (t,X_{t}, m_{t}^{n_k} ) - f (t,X_{t}, m_{t} ) \right) \left(1-\xi_{t} )\right| d t+\int_{[0,T]} \left| g (
X_{t}, \mu^{n_k}) - g (
X_{t}, \mu)  \right| d \xi_{t}\right] 
\\
    & \quad \quad + |\lambda^{n_k} -  \lambda| \, \sup_{\xi\in\A}  \mathbb{E}\left[\int_{0}^{T} \mathcal E (\xi_t) dt \right] \\ 
 & \quad \leq 
    \mathbb{E}\left[ \int_{0}^{T} \left| f (t,X_{t}, m_{t}^{n_k} ) - f (t,X_{t}, m_{t} ) \right| dt \right] + \mathbb{E}\left[ \sup_{ t \in [0,T]} \left| g (
X_{t}, \mu^{n_k}) - g (
X_{t}, \mu)  \right|  \right] + C|\lambda^{n_k} -  \lambda|,
    \end{aligned}
    $$
    so that it remains to study the convergence of the  two terms under expectation. 
    For the term involving $f$, we show that for any subsequence of $(n_k)_k$ (not relabelled), there exists a further subsequence (again, not relabelled) such that 
    \begin{equation}\label{eq limits f}
    \lim_k  \mathbb{E}\left[ \int_{0}^{T} \left| f (t,X_{t}, m_{t}^{n_k} ) - f (t,X_{t}, m_{t} ) \right| dt \right]=0,  
    \end{equation}
    so that the whole sequence converges. 
    Indeed, given a subsequence, we can find a further subsequence such that $d^\prime_p (m_t ^{n_k}, m_t) \to 0$ $dt$-a.e. (see \eqref{eq conv in dt after conv in measure}), and thanks to the continuity of $f$, we have that $f (t,X_{t}, m_{t}^{n_k} ) \to f (t,X_{t}, m_{t} )$ $\mathbb P \otimes dt$-a.e.
    Moreover, the growth conditions of $f$ give
    $$
    \begin{aligned}
    \sup_k  \left| f (t,X_{t}, m_{t}^{n_k} ) - f (t,X_{t}, m_{t} ) \right| 
    & \leq C \Big( 1  +  \sup_{t \in [0,T]} |X_t|^p  + \sup_k \int_{\R^d} |z|^p  m^{n_k}_t(dz) +  \int_{\R^d} |z|^p  m_t(dz) \Big) \\
    & \leq C \Big( 1  +  \sup_{t \in [0,T]} |X_t|^p  + 2 C_p \Big), 
    \end{aligned}
    $$
    for a constant $C_p$ as in Lemma \ref{lemma a priori estimates} (notice indeed that the estimate $  \int_{\R^d} |z|^p  m_t(dz)$ easily follows by  convergence).
    Since $\E \left[ \sup_{t \in [0,T]} |X_t|^p \right]$ is finite (see \eqref{eq estimate SDE}), the limits in \eqref{eq limits f} follows by the dominated convergence theorem. 
    We next show the limit for the term involving $g$. 
    Using the uniform continuity of $g$, for $w_g$ as in Assumption \ref{assumption} we have 
    \begin{equation*}
        \lim_k \mathbb{E}\left[ \sup_{ t \in [0,T]} \left| g (X_{t}, \mu^{n_k}) - g (X_{t}, \mu) \right|  \right] 
        \leq  \lim_k  C \Big( 1  + \mathbb{E} \Big[ \sup_{t \in [0,T]} |X_t|^p  \Big] \Big) \,  w_g (\mu^{n_k}, \mu)  = 0.  
    \end{equation*}
    This limit, together with \eqref{eq limits f}, allows us to obtain  \eqref{eq lemma continuity zero limit}.

    As a consequence of \eqref{eq lemma continuity zero limit},   we have 
    \begin{equation}
        \label{eq lemma continuity first limit}
        \lim_k J^{\lambda^{n_k}} (\xi, m^{n_k} , \mu^{n_k}) = J^{\lambda } (\xi, m  , \mu),
    \end{equation}
    as well as 
    \begin{equation}
        \label{eq lemma continuity second limit}
        \lim_k |J^{\lambda^{n_k}} (\xi ^{n_k}, m^{n_k} , \mu^{n_k}) - J^{\lambda } (\xi^{n_k}, m  , \mu)| =0. 
    \end{equation}

    We next want to show that
    \begin{equation}
        \label{eq lemma continuity third limit}
        V^* := \limsup_k J^{\lambda } (\xi^{n_k}, m  , \mu) \leq J^{\lambda } (\xi^*, m  , \mu).
    \end{equation}
    Fix a  subsequence (not relabelled)  $(n_k)_k$ such that $ V^* := \lim_k J^{\lambda } (\xi^{n_k}, m  , \mu)$. 
    Since $\sup_{t \geq 0} \xi^{n_k}_t \leq 1$, we can again employ  Lemma 3.5 in \cite{K} in order to find a subsequence (not relabelled)  $(n_k)_k$ and a limit point $\bar \xi \in \mathcal A$ such that, setting
$
\zeta^j_t := \frac1j \sum_{k=1}^j \xi^{n_k}_t,
$
we have
$$
\int_{[0,T]}
\phi_t d \zeta^j_t \to \int_{[0,T]} \phi_t d \bar \xi_t,  \ \mathbb P \text{-a.s., as $j \to \infty$, for any $\phi \in C_b([0,T])$.}
$$
The processes  $\bar \xi$ and $\xi^*$ coincide. 
Furthermore, by the convergence of $\zeta^j$ and the concavity of $J^\lambda (\cdot, m,\mu)$ we find
$$
\begin{aligned}
    J^{\lambda } (\xi^*, m  , \mu) = \lim_j J^\lambda (\zeta^j , m,\mu) &\geq \lim_j \frac{1}{j} \sum_{k=1}^j J^\lambda (\xi^{n_k} , m,\mu) = V^*,
\end{aligned}
$$
which proves \eqref{eq lemma continuity third limit}.

Finally, by using \eqref{eq lemma continuity first limit}, \eqref{eq lemma continuity second limit} and \eqref{eq lemma continuity third limit} to take limits as $k \to \infty$ in \eqref{eq lemma continuity optimality prelimit}, we obtain
$$
J^{\lambda } (\xi^*, m  , \mu ) 
\geq 
J^{\lambda } (\xi, m  , \mu), \quad \forall \xi \in \A, 
$$
showing that $\xi^* \in R_\lambda (m,\mu)$.
When $\lambda =0$, this completes the proof of Claim \eqref{lemma continuity best response: two}.

To show Claim \eqref{lemma continuity best response: one}, we notice that, when $\lambda >0$, by uniqueness of the optimal control we have that $\xi^* = R_\lambda (m,\mu)$.
In this case, we have shown that for any subsequence  $(n_k)_k$ we can find a further subsequence (not relabelled) such that $(\xi^{n_k})_k$ converges to  $R_\lambda (m,\mu)$. 
Hence, the whole sequence $(\xi^n)_n$ converges to $R_\lambda (m,\mu)$, thus completing the proof.
\end{proof}

\subsection{Existence of equilibria of the SC-MFG}
We are now ready to discuss the following existence result.
\begin{theorem}
    \label{theorem existence MFGE for SC-MFG}
    Under Assumption \ref{assumption}, for any $\lambda\geq 0$ there exists an equilibrium to the $\lambda$-SC-MFG \eqref{eq def equilibrium SC lambda MFG}.
\end{theorem}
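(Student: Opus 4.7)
The plan is to reformulate the definition of equilibrium as a fixed point problem on the set of singular controls $\mathcal{A}$, viewed as a subset of the Hilbert space $\mathbb{L}^2(\Omega \times [0,T])$ equipped with its weak topology, and then invoke the Kakutani--Fan--Glicksberg fixed point theorem. Concretely, define the correspondence
\begin{equation*}
T_\lambda : \mathcal{A} \rightrightarrows \mathcal{A}, \qquad T_\lambda(\xi) := R_\lambda\bigl(\Gamma(\xi)\bigr),
\end{equation*}
where $\Gamma$ is the consistency map from \eqref{eq consistency map} and $R_\lambda$ is the best response map from \eqref{eq best response map}. Any fixed point $\hat\xi \in T_\lambda(\hat\xi)$ immediately yields an equilibrium by setting $(\hat m, \hat\mu) := \Gamma(\hat\xi)$, because then the consistency conditions in \eqref{eq def equilibrium SC lambda MFG} hold by definition of $\Gamma$, while the optimality condition amounts exactly to $\hat\xi \in R_\lambda(\hat m, \hat\mu)$.

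\textbf{Verification of the Kakutani--Fan--Glicksberg hypotheses.} The set $\mathcal{A}$ is convex, bounded and strongly closed in $\mathbb{L}^2(\Omega \times [0,T])$, hence weakly closed (by Mazur's theorem) and weakly compact (by Banach--Alaoglu), as already observed in Section \ref{sec:existence}. For any $\xi \in \mathcal{A}$ the set $T_\lambda(\xi)$ is nonempty by the classical existence of optimizers for concave problems recalled after \eqref{eq entropy regularized functional}, and it is convex: indeed $J^\lambda(\cdot, m, \mu)$ is concave on the convex set $\mathcal{A}$, so its argmax is a convex set (a singleton when $\lambda > 0$). The only nontrivial hypothesis left is that $T_\lambda$ has closed graph in $\mathcal{A}_{\mathrm{weak}} \times \mathcal{A}_{\mathrm{weak}}$.

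\textbf{Closed graph of $T_\lambda$.} Suppose $\xi^n \to \xi$ and $\eta^n \to \eta$ weakly in $\mathbb{L}^2$, with $\eta^n \in R_\lambda(\Gamma(\xi^n))$; I want to conclude $\eta \in R_\lambda(\Gamma(\xi))$. First, I would argue that the restriction of $\Gamma$ to $\mathcal{A}_{\mathrm{weak}}$ is continuous into $\mathcal{M}$: by Lemma \ref{lemma compactness Gamma A} the image $\Gamma(\mathcal{A})$ is compact in $\mathcal{M}$, so every subsequence of $(\Gamma(\xi^n))_n$ admits a further subsequence converging to some $(m^*,\mu^*) \in \mathcal{M}$, and Lemma \ref{lemma closedness Gamma A} forces $(m^*,\mu^*) = \Gamma(\xi)$; hence the full sequence converges, $\Gamma(\xi^n) \to \Gamma(\xi)$. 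With this in hand, if $\lambda > 0$, Lemma \ref{lemma continuity best response}\eqref{lemma continuity best response: one} gives $\eta^n = R_\lambda(\Gamma(\xi^n)) \to R_\lambda(\Gamma(\xi))$ and, by uniqueness of the weak limit, $\eta = R_\lambda(\Gamma(\xi)) \in T_\lambda(\xi)$; if $\lambda = 0$, Lemma \ref{lemma continuity best response}\eqref{lemma continuity best response: two} (applied along the constant sequence $\lambda^n \equiv 0$) yields a subsequence of $(\eta^n)_n$ converging weakly to some $\eta^* \in R_0(\Gamma(\xi))$, and uniqueness of weak limits again gives $\eta = \eta^* \in T_0(\xi)$. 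The Kakutani--Fan--Glicksberg theorem then delivers a fixed point of $T_\lambda$, which corresponds to an equilibrium.

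\textbf{Main obstacle.} The conceptual core of the argument is the closed-graph property of the composed correspondence $R_\lambda \circ \Gamma$, complicated by the fact that $\mathcal{A}$ carries the weak $\mathbb{L}^2$ topology while $\Gamma(\mathcal{A})$ lives in a Wasserstein-type product space $\mathcal{M}$ that is not a topological vector space. This is where the nonlinearity and measure-valued nature of the consistency condition \eqref{eq consistency singular control} would normally obstruct a direct application of existing results on singular-control MFGs; in our setting, however, the obstacle has been defused in advance by Lemmas \ref{lemma closedness Gamma A}, \ref{lemma compactness Gamma A} and \ref{lemma continuity best response}, so the proof reduces to packaging these ingredients into the Kakutani--Fan--Glicksberg framework.
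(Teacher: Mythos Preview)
Your proposal is correct and follows essentially the same route as the paper's proof: both apply the Kakutani--Fan--Glicksberg theorem to the composition $R_\lambda \circ \Gamma$ on the weakly compact convex set $\mathcal{A}$, and both establish the closed-graph property by combining the compactness of $\Gamma(\mathcal{A})$ (Lemma~\ref{lemma compactness Gamma A}), the closed graph of $\Gamma$ (Lemma~\ref{lemma closedness Gamma A}), and the continuity/closed-graph properties of $R_\lambda$ (Lemma~\ref{lemma continuity best response}). Your write-up is in fact slightly more explicit than the paper's in two respects---you verify convexity of the values $T_\lambda(\xi)$ (needed for Kakutani but not stated in the paper's proof) and you separate the cases $\lambda>0$ and $\lambda=0$ when invoking Lemma~\ref{lemma continuity best response}---but these are expository refinements, not a different argument.
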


\begin{proof}
The proof consists in employing Kakutani–Glicksberg–Ky Fan fixed point theorem to the map $\mathcal R _\lambda: \mathcal A \to \mathcal A$, given by the composition $\mathcal R_\lambda := R_\lambda \circ \Gamma $. 
Noticing that $\mathcal A$ is a nonempty compact convex subset of the  locally convex Hausdorff topological vector space $\mathbb L ^2 ( \Omega \times [0,T])$, it only remains to show that the map $\mathcal R _\lambda$ has closed graph.

To this end, for any sequence $\xi^n \to \xi$, with
$\mathcal R _\lambda (\xi^n) \to  \xi^\lambda \in \mathcal A $, we need to show that $\xi^\lambda \in \mathcal R _\lambda (\xi)$.
Set
$ (m^n, \mu^n):= (m^{\xi^n}, \mu^{\xi^n}):=\Gamma(\xi^n)$ and notice that, due to the compactness of $\Gamma (\mathcal A )$ (see Lemma \ref{lemma compactness Gamma A}), there exists a subsequence (not relabeled) 
and a limit point $(m, \mu) \in \mathcal M$ such that $(m^n, \mu^n) \to (m, \mu)$. 
Moreover, by Lemma \ref{lemma closedness Gamma A}, we have
\begin{equation}
    \label{eq inside exist}
    (m, \mu) = \Gamma (\xi).
\end{equation}
Now, by assumption, $\mathcal R _\lambda (\xi^n) =  R _\lambda ( \Gamma (\xi^n))$ converges to $\xi^\lambda$ and,
using \eqref{eq inside exist}, we have
$\Gamma (\xi^{n_k}) \to \Gamma (\xi)$ on a subsequence $n_k$. 
Thanks to Lemma \ref{lemma continuity best response}, the map $R_\lambda : \mathcal M \to \mathcal A$ has closed graph, so that
$$
\xi^\lambda \in R_\lambda ( \Gamma (\xi) ) = \mathcal R _\lambda (\xi).
$$
This shows that $\mathcal R _\lambda$ has closed graph, and hence completes the proof.
\end{proof}

\subsection{$\lambda$-stability of equilibria}
A crucial point when studying the limit, as $\lambda \to 0$, of the equilibria to the entropy regularized $\lambda$-SC-MFG \eqref{eq def equilibrium SC lambda MFG} is whether the equilibrium to the $0$-SC-MFG \eqref{eq def equilibrium SC lambda MFG} is unique or not. 

To this end, we introduce the following requirement, which is a version of the so called Lasry-Lions monotonicity conditions. 

\begin{assumption}[Monotonicity]\label{assumption monotonicity} 
For any $\lambda \geq 0$, the following hold true:
    \begin{enumerate}
        \item\label{assumption monotonicity: uniqueness R} For any $(m, \mu)$,  $R_\lambda (m,\mu)$ is a singleton.
        \item\label{assumption monotonicity: LL}  
        $
        J^{\lambda}(\xi, m^{\xi},\mu^{\xi}) - J^{\lambda}(\bar \xi, m^{ \xi},\mu^{\xi}) - ( J^{\lambda}(\xi, m^{\bar \xi},\mu^{\bar{\xi}}) - J^{\lambda}(\bar \xi, m^{\bar \xi}, \mu^{\bar{\xi}})) \leq  0.
        $
    \end{enumerate}
\end{assumption}
Notice that Condition \ref{assumption monotonicity: uniqueness R} in Assumption \ref{assumption monotonicity} is always satisfied when $\lambda >0$.
\begin{remark} We provide here an example in which Condition \eqref{assumption monotonicity: LL} in Assumption \eqref{assumption monotonicity} is satisfied. For example, one could consider $f$ and $g$ of the form
\begin{align*}
f(t,x,m)=\bar{k}(x)\bar{f}\left(t,\int_{\mathbb{R}^d}\bar{k}(x)m_t(dx) \right),
\quad
g(x,\mu)=\bar{\ell}(x)\bar{h}\left(\int_{[0,T] \times \mathbb{R}^d}\bar{\ell}(x)\mu(dt,dx) \right),
\end{align*}
with $\bar{f}$ non-increasing in the second argument and $\bar{h}$ non-decreasing.
\end{remark}
\begin{theorem}
    \label{theorem uniqueness MFGE}
    Under Assumptions \ref{assumption} and \ref{assumption monotonicity}, for any $\lambda \geq 0$ there exists a unique equilibrium to $\lambda$-SC-MFG \eqref{eq def equilibrium SC lambda MFG}.
\end{theorem}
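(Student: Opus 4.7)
The plan is to leverage the already–established existence from Theorem \ref{theorem existence MFGE for SC-MFG} and prove uniqueness via the classical Lasry–Lions pairing argument, suitably adapted to the singular-control formulation. Specifically, suppose towards uniqueness that $(\hat \xi, \hat m, \hat \mu)$ and $(\bar \xi, \bar m, \bar \mu)$ are two equilibria of the $\lambda$-SC-MFG. By the consistency conditions in \eqref{eq def equilibrium SC lambda MFG}, which are exactly $\Gamma(\hat \xi) = (\hat m, \hat \mu)$ and $\Gamma(\bar \xi) = (\bar m, \bar \mu)$, I can freely substitute $\hat m = m^{\hat \xi}$, $\hat \mu = \mu^{\hat \xi}$ (and analogously for $\bar \xi$) in every occurrence below.

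The core step is the following two-line pairing. From the optimality parts of the equilibrium definition I get
\[
J^\lambda(\hat \xi, m^{\hat \xi}, \mu^{\hat \xi}) \geq J^\lambda(\bar \xi, m^{\hat \xi}, \mu^{\hat \xi}),
\qquad
J^\lambda(\bar \xi, m^{\bar \xi}, \mu^{\bar \xi}) \geq J^\lambda(\hat \xi, m^{\bar \xi}, \mu^{\bar \xi}),
\]
and summing yields
\[
J^\lambda(\hat \xi, m^{\hat \xi}, \mu^{\hat \xi}) - J^\lambda(\bar \xi, m^{\hat \xi}, \mu^{\hat \xi}) - \bigl(J^\lambda(\hat \xi, m^{\bar \xi}, \mu^{\bar \xi}) - J^\lambda(\bar \xi, m^{\bar \xi}, \mu^{\bar \xi})\bigr) \geq 0.
\]
Assumption \ref{assumption monotonicity}\eqref{assumption monotonicity: LL} gives the reverse inequality, so the sum is actually zero. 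Since each of the two optimality inequalities was nonnegative and they sum to zero, both must in fact be equalities. In particular $J^\lambda(\bar \xi, m^{\hat \xi}, \mu^{\hat \xi}) = J^\lambda(\hat \xi, m^{\hat \xi}, \mu^{\hat \xi})$, which says $\bar \xi \in R_\lambda(\hat m, \hat \mu)$.

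To conclude, I invoke Assumption \ref{assumption monotonicity}\eqref{assumption monotonicity: uniqueness R}, stating that $R_\lambda(\hat m, \hat \mu)$ is a singleton (automatic from strict concavity of $J^\lambda(\cdot, m, \mu)$ when $\lambda>0$, and assumed a priori when $\lambda=0$). This forces $\bar \xi = \hat \xi$, and then applying $\Gamma$ to both sides gives $(\bar m, \bar \mu) = \Gamma(\bar \xi) = \Gamma(\hat \xi) = (\hat m, \hat \mu)$, which completes the uniqueness argument.

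I do not anticipate any serious obstacle: the argument is entirely algebraic once the monotonicity inequality is in place, and both the consistency identification of $(\hat m, \hat \mu)$ with $\Gamma(\hat \xi)$ and the singleton condition on $R_\lambda$ are exactly designed to make the pairing trick work in our singular-control setting. The only point requiring a minor comment is to emphasize that for $\lambda>0$ the uniqueness-of-best-response hypothesis is redundant (it follows from the strict concavity of $\xi \mapsto J^\lambda(\xi, m, \mu)$ already noted after \eqref{eq entropy regularized functional}), so that Assumption \ref{assumption monotonicity}\eqref{assumption monotonicity: uniqueness R} imposes a genuinely new condition only in the unregularized limit case $\lambda=0$.
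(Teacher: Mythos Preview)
Your proof is correct and follows essentially the same Lasry--Lions pairing argument as the paper: both combine the two optimality inequalities with Assumption~\ref{assumption monotonicity}\eqref{assumption monotonicity: LL} and then invoke the singleton property \eqref{assumption monotonicity: uniqueness R} to conclude. The only cosmetic difference is that the paper argues by contradiction (assuming the equilibria are distinct forces the optimality inequalities to be strict, contradicting the monotonicity bound), whereas you argue directly that the pairing sum vanishes and hence each optimality inequality is an equality; both routes are equally valid and equally short.
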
 
\begin{proof}
The proof of this result this theorem is standard and slightly adapted from \cite{bouveret.dumitrescu.tankov.20}, we include it for completeness.

Arguing by contradiction, assume that there exists two distinct mean-field equilibria $(\xi, m, \mu)$, $(\bar \xi, \bar m, \bar \mu)$.
By uniqueness of the optimal control and the definition of equilibrium, we have 
$$
\begin{aligned}
    J^{\lambda}(\xi, m^{\xi}, \mu^{\xi}) - J^{\lambda}(\bar \xi, m^{ \xi}, \mu^{\xi}) &>0
    \and
    J^{\lambda}(\bar \xi, m^{\bar \xi}, \mu^{\bar \xi}) - J^{\lambda}( \xi, m^{\bar \xi}, \mu^{\bar \xi})>0.
\end{aligned}
$$
Summing up these two inequalities, we obtain
$$
     J^{\lambda}(\xi, m^{\xi}, \mu^{\xi}) - J^{\lambda}(\bar \xi, m^{ \xi}, \mu^{\xi}) - ( J^{\lambda}(\xi, m^{\bar \xi} , \mu^{\bar \xi}) - J^{\lambda}(\bar \xi, m^{\bar \xi}, \mu^{\bar \xi})) >0,
$$
which contradicts the second condition in Assumption \ref{assumption monotonicity}.
Therefore, the equilibrium is unique.
\end{proof}

Entropy regularization and the corresponding temperature parameter $\lambda$ are introduced to  encourage randomization in a learning environment. It is important to understand the  closeness between the equilibria of the entropy-regularized problem \eqref{eq def equilibrium SC lambda MFG} -- which we aim to learn -- and those of the original problem \eqref{eq def equilibrium OS MFG}. We discuss this convergence in the following theorem.

\begin{theorem}
    \label{theorem qualitative stability in lambda}
    Under Assumption \ref{assumption}, for any $\lambda > 0$, let $(\hat \xi ^\lambda, \hat m ^\lambda, \hat \mu ^\lambda)$ be an equilibrium to the $\lambda$-SC-MFG.
    Then
    \begin{enumerate}
        \item\label{theorem qualitative stability in lambda: one} $(\hat \xi ^\lambda, \hat m ^\lambda, \hat \mu ^\lambda) \to (\hat \xi ^0, \hat m ^0, \hat \mu ^0)$ up to subsequence as $\lambda \to 0$, where $(\hat \xi ^0, \hat m ^0, \hat \mu ^0)$ is an equilibrium to the $0$-SC-MFG;
        \item\label{theorem qualitative stability in lambda: two}  Under the additional Assumption \ref{assumption monotonicity}, we have $(\hat \xi ^\lambda, \hat m ^\lambda, \hat \mu ^\lambda) \to (\hat \xi ^0, \hat m ^0, \hat \mu ^0)$ as $\lambda \to 0$. 
    \end{enumerate}
\end{theorem}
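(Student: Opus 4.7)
The plan is to exploit the compactness and closedness results established in Lemmas \ref{lemma compactness Gamma A}, \ref{lemma closedness Gamma A}, and the stability of best responses in Lemma \ref{lemma continuity best response}(2), together with the uniqueness result of Theorem \ref{theorem uniqueness MFGE} for the second part.

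For Claim \eqref{theorem qualitative stability in lambda: one}, fix an arbitrary sequence $\lambda_n \downarrow 0$. Since the equilibrium condition forces $(\hat m^{\lambda_n},\hat \mu^{\lambda_n}) = \Gamma(\hat\xi^{\lambda_n}) \in \Gamma(\mathcal A)$, and $\Gamma(\mathcal A)$ is compact by Lemma \ref{lemma compactness Gamma A}, I would extract a subsequence (not relabeled) so that $(\hat m^{\lambda_n},\hat \mu^{\lambda_n}) \to (\hat m^0,\hat \mu^0)$ in $\mathcal M$. I then apply Lemma \ref{lemma continuity best response}(2) to $(\hat m^{\lambda_n},\hat \mu^{\lambda_n},\lambda_n) \to (\hat m^0,\hat \mu^0,0)$ with $\hat \xi^{\lambda_n} \in R_{\lambda_n}(\hat m^{\lambda_n},\hat \mu^{\lambda_n})$, to obtain a further subsequence with $\hat \xi^{\lambda_n} \to \hat \xi^0$ in the weak topology of $\A$, for some $\hat \xi^0 \in R_0(\hat m^0,\hat \mu^0)$. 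This yields the optimality condition in the definition of equilibrium. Finally, since $\Gamma$ has closed graph (Lemma \ref{lemma closedness Gamma A}) and we now have $\hat \xi^{\lambda_n} \to \hat \xi^0$ together with $\Gamma(\hat\xi^{\lambda_n}) \to (\hat m^0,\hat \mu^0)$, I conclude that $\Gamma(\hat\xi^0) = (\hat m^0,\hat \mu^0)$, which is exactly the consistency condition. Hence $(\hat\xi^0,\hat m^0,\hat \mu^0)$ is an equilibrium to the $0$-SC-MFG.

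For Claim \eqref{theorem qualitative stability in lambda: two}, under the additional Assumption \ref{assumption monotonicity}, Theorem \ref{theorem uniqueness MFGE} gives uniqueness of the equilibrium to the $0$-SC-MFG; call it $(\hat\xi^0,\hat m^0,\hat \mu^0)$. I would then run a standard subsequence-of-subsequence argument: for any sequence $\lambda_n \downarrow 0$, Claim \eqref{theorem qualitative stability in lambda: one} applied to any subsequence of $(\hat\xi^{\lambda_n},\hat m^{\lambda_n},\hat \mu^{\lambda_n})$ extracts a further subsequence converging to some equilibrium of the $0$-SC-MFG, which by uniqueness must coincide with $(\hat\xi^0,\hat m^0,\hat \mu^0)$. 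Since every subsequence has a further subsequence converging to the same limit (in the product topology of $\mathcal A \times \mathcal M$), the whole sequence converges, giving the desired statement.

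The only subtle point is matching the various topologies: convergence of $\hat\xi^{\lambda_n}$ is understood in the weak $L^2(\Omega\times[0,T])$ topology, while convergence of $(\hat m^{\lambda_n},\hat \mu^{\lambda_n})$ is in the product of the convergence-in-measure topology on $M_p$ and the Wasserstein topology on $\mathcal P_p([0,T]\times \mathbb R^d)$. The main obstacle is to ensure that the limiting pair $(\hat m^0,\hat \mu^0)$ coming from compactness of $\Gamma(\mathcal A)$ is the same as the one reconstructed from the limiting control $\hat\xi^0$ via $\Gamma$; this is precisely what the closed-graph property in Lemma \ref{lemma closedness Gamma A} provides, so no additional work is needed beyond carefully chaining the extractions.
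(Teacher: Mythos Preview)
Your proof is correct and follows essentially the same approach as the paper's: compactness of $\Gamma(\mathcal A)$ (and of $\mathcal A$) to extract convergent subsequences, Lemma~\ref{lemma continuity best response}(2) to pass optimality to the limit, the closed-graph property of $\Gamma$ (Lemma~\ref{lemma closedness Gamma A}) for consistency, and then uniqueness plus a subsequence-of-subsequence argument for Claim~\eqref{theorem qualitative stability in lambda: two}. The only cosmetic difference is the order of extraction---the paper extracts a limit of the full triple via compactness of $\mathcal A \times \Gamma(\mathcal A)$ and then invokes Lemma~\ref{lemma continuity best response}(2), whereas you first extract the measure limit and then the control limit---but the content is identical.
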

\begin{proof}
To simplify the notation, we simply write $(\xi^\lambda, m ^\lambda, \mu ^\lambda)$ instead of $(\hat \xi ^\lambda, \hat m ^\lambda, \hat \mu ^\lambda)$, for any $\lambda \geq 0$.

Fix any sequence $(\lambda_n)_n \subset (0,\infty)$ with $\lambda _n \to 0$ as $n \to \infty$ and set 
$$
( \xi ^n,  m ^n,  \mu ^n) 
:= ( \xi ^{\lambda_n},  m^{\lambda_n},\mu^{\lambda_n}).
$$
Since $( \xi ^n,  m ^n,  \mu ^n)$ are assumed to be equilibria, we have $ (m ^n,  \mu ^n) = \Gamma (R_\lambda (\xi^n))$, so that $(m ^n,  \mu ^n)_n \subset \Gamma (\mathcal A)$.
By compactness of $\mathcal A \times \Gamma (\mathcal A)$  (see Lemma \ref{lemma compactness Gamma A}), we can extract a subsequence (not relabeled) of $(\xi^n,m^n,\mu^n)_n$ and a limit point $(\xi^0, m^0,\mu^0)$ such that $(\xi^n, m^n,\mu^n) \to (\xi^0, m^0,\mu^0)$.
Thanks to Lemma \ref{lemma continuity best response: two}, we have that $\xi^0 \in  R _0 (m^0,\mu^0)$. 
Moreover, by repeating the arguments in the proof of Theorem \ref{theorem existence MFGE for SC-MFG}, we have that $\Gamma (\xi^0) = (m^0,\mu^0)$. 
This, in turn implies that $\xi^0 \in  R_0 (m^0,\mu^0) =  R_0 (\Gamma (\xi^0))=\mathcal R_0 (\xi^0)$, so that $(\xi^0, m^0,\mu^0)$ is an equilibrium of the $0$-SC-MFG.
This completes the proof of Claim \eqref{theorem qualitative stability in lambda: one}. 

When  the additional Assumption \ref{assumption monotonicity} holds, by Theorem \ref{theorem uniqueness MFGE} we have the uniqueness of the equilibrium of the $0$-SC-MFG.
Hence, by the previous argument, any sequence $( \xi ^{\lambda_n},  m^{\lambda_n},\mu^{\lambda_n})$ converges to $(\xi^0, m^0,\mu^0)$, thus proving Claim \eqref{theorem qualitative stability in lambda: two}.
\end{proof}

\section{Fictitious play algorithms}
\label{sec:fictplay}



In this section, we introduce two novel fictitious play algorithms for computing the unique mean-field equilibrium $\lambda$-SC-MFG in the case when $\lambda>0$, and we establish their convergence under different data assumptions.

Note that the convergence of iterative numerical schemes -- such as fictitious play algorithms under known model parameters -- serves as a fundamental foundation for analyzing the convergence of RL algorithms in environments with unknown parameters. In many popular RL methods, such as Q-learning and actor-critic algorithms \cite{sutton1998reinforcement}, the known quantities in these schemes (e.g., the value function) are simply replaced by their corresponding estimates, computed from available observations.\\

\vspace{3mm}
\begin{algorithm}[H]\label{algo 1}
\SetAlgoLined
\KwData{A number of steps $n$ for the equilibrium approximation; a control $\bar{\xi}^{0} \in \mathcal{A}$ and $(\bar \mu^{0}, \bar m^{0}):=\Gamma(\bar{\xi}^{0}) \in \mathcal{R}_{\lambda}$;}
\KwResult{Approximate  MFG Nash equilibrium}
\For{$k=0, 1, \ldots, n-1$}{
Set $\xi^{k+1}:=\argmax_{\xi \in \mathcal{A}}{J}^\lambda (\xi,\bar m^{k},\bar \mu^{k})$;
\\
Set $\bar{\xi}^{k+1}:=\frac{k}{k+1}\bar \xi^{k} + \frac{1}{k+1}\xi^{k+1}=\frac{1}{k + 1}\sum_{\ell=1}^{k+1}\xi^{\ell}$; \\
Set $(\bar{\mu}^{k+1}, \bar{m}^{k+1}) :=\Gamma(\bar{\xi}^{k+1})$.
}
\caption{Fictitious play algorithm}
\end{algorithm}

Observe that, due to the linear dependence of the measure $(m,\mu)$ with respect to $\xi$, we have that $(\bar{\mu}^{n}, \bar{m}^{n})=\frac{1}{n} \sum_{k=1}^{n}({\mu}^{\xi^k}, {m}^{\xi^k})$.

In the next two subsections, we establish the convergence of the fictitious play algorithm $(\bar{m}^n, \bar{\mu}^n)_n$ to the equilibria of the $\lambda$-SC-MFG, under suitable structural conditions on the data, and for suitable choice of the initialization.

\subsection{Fictitious play under the Lasry-Lions condition.}

We start with the fictitious play algorithm under the Lasry-Lions monotonicity condition. 

This fictitious-play algorithm is novel in the context of SC-MFGs and also brings new contributions compared to the fictitious-play algorithm developed in \cite{dumitrescu.leutscher.tankov.2022linear} for OS-MFGs. In particular, it extends to cases where the profit-functional is non-linear with respect to the \textit{randomized stopping}, a setting in which the linear programming approach developed in \cite{dumitrescu.leutscher.tankov.2022linear} fails.

To simplify the presentation, we set up some notation. First, notice that for a given $(m,\mu) \in \Gamma({\mathcal{A}})$ and $\xi \in \mathcal{A}$, by using the identification between controls and associated measures given by $\eqref{eq consistency map}$,
we can rewrite $J^\lambda$ as follows:
\begin{align*}
J^\lambda(\xi,m,\mu)=\int_0^T \int_{\mathbb{R}^d} \left(f(t,x,m_t)m_t^{\xi}(dx)dt+g(t,x,\mu)\mu^{\xi}(dt,dx)\right)+\lambda \mathbb{E}\int_0^T {\mathcal{E}}(\xi_t)dt.
\end{align*}

We now introduce the bilinear form notation 
$$\langle f(m), m' \rangle:=\int_0^T\int_{\mathbb{R}^d}f(t, x, m_t)m_t'(dx)dt,\quad \langle g(\mu), \mu' \rangle:=\int_{[0, T]\times \mathbb{R}^d}g(t, x, \mu)\mu'(dt, dx),
$$
where $(\mu, m), (\mu', m')\in \mathcal{P}_p([0, T]\times \mathbb{R})\times M_p$.
Therefore, $J^\lambda$ writes as follows:
\begin{align*}
J^\lambda(\xi,m,\mu)=\langle f(m),m^\xi \rangle +\langle g(\mu),\mu^\xi \rangle+\lambda \mathbb{E}\int_0^T {\mathcal{E}}(\xi_t)dt.
\end{align*}

\noindent We assume the following conditions hold:
\begin{assumption}\label{main assump OS}
There exist constants $c_f > 0$ and $c_g > 0$ such that for all $t\in [0, T]$, $x, x'\in \mathbb{R}^d$, $m, m'\in \mathcal{P}^{sub}_p(\mathbb{R}^d)$, $\mu, \mu'\in \mathcal{P}_p([0, T]\times \mathbb{R}^d)$, 
$$\left|f(t, x, m) - f(t, x, m')\right|\leq c_f(1+|x|) \int_{\mathbb{R}^d}(1+|z|^p)|m-m'|(dz),$$
$$|f(t, x, m) - f(t, x, m') - f(t, x', m) + f(t, x', m')|\leq c_f |x-x'|\int_{\mathbb{R}^d}(1+|z|^p)|m-m'|(dz),$$ 
$$|g(t, x, \mu) - g(t, x, \mu')|\leq c_g (1+|x|) \int_{[0, T]\times \mathbb{R}^d}(1+|z|^p)|\mu-\mu'|(ds, dz),$$
$$|g(t, x, \mu) - g(t, x, \mu') - g(t', x', \mu) + g(t', x', \mu')|\leq c_g (|t-t'|+|x-x'|)\int_{[0, T]\times \mathbb{R}^d}(1+|z|^p)|\mu-\mu'|(ds, dz),$$
\end{assumption}
\noindent {where $|m|$ denotes the total variation measure of $m$.}  

We recall that, by Lemma \ref{lemma compactness Gamma A}, the set $\Gamma(\mathcal{A}) \subset \mathcal{M}^\star$. By Proposition 2.13 in \cite{dumitrescu.leutscher.tankov.2022linear}, the topology on the set $\mathcal{M}^\star$ is induced by the metric $\rho((m,\mu),(m',\mu')):=d_1(\mu,\mu')+d_1^M(m,m')$.

We now give the main convergence result of this section.

\begin{theorem}\label{theorem fictitious monotone}
Under Assumptions \ref{assumption}, \ref{assumption monotonicity} and \ref{main assump OS}, for any initialization $(\bar \xi^0, \bar m^0, \bar \mu^0)$, the sequence $(\bar \xi^n, \bar{m}^n, \bar{\mu}^n)_n$ defined in Algorithm 1 converges to the unique equilibrium  $(\hat{\xi}^\lambda, \hat m^\lambda, \hat \mu^\lambda)$.
\end{theorem}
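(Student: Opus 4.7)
The plan is to adapt the standard monotone fictitious play analysis to the randomized/entropy-regularized SC-MFG, tracking the exploitability
\[
\Phi_k := J^\lambda(\xi^{k+1}, \bar m^k, \bar \mu^k) - J^\lambda(\bar \xi^k, \bar m^k, \bar \mu^k) \geq 0
\]
and the tighter equilibrium gap $V_k := J^\lambda(\hat \xi^\lambda, \bar m^k, \bar \mu^k) - J^\lambda(\bar \xi^k, \bar m^k, \bar \mu^k)$. Applying Assumption \ref{assumption monotonicity}(\ref{assumption monotonicity: LL}) to the pair $(\bar \xi^k, \hat \xi^\lambda)$ and using optimality of $\hat \xi^\lambda$ for $(\hat m^\lambda, \hat \mu^\lambda)$ gives the chain $0 \leq J^\lambda(\hat \xi^\lambda, \hat m^\lambda, \hat \mu^\lambda) - J^\lambda(\bar \xi^k, \hat m^\lambda, \hat \mu^\lambda) \leq V_k \leq \Phi_k$, so once $\Phi_k \to 0$, strict concavity of $\xi \mapsto J^\lambda(\xi, \hat m^\lambda, \hat \mu^\lambda)$ (which holds because $\lambda > 0$) together with uniqueness of $\hat \xi^\lambda$ as its maximizer forces $\bar \xi^n \to \hat \xi^\lambda$; the closed-graph property in Lemma \ref{lemma closedness Gamma A} then delivers $(\bar m^n, \bar \mu^n) = \Gamma(\bar \xi^n) \to (\hat m^\lambda, \hat \mu^\lambda)$.

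The argument rests on four structural facts. First, $\mathcal A$ and $\Gamma(\mathcal A)$ are compact (Lemmas \ref{lemma closedness Gamma A}--\ref{lemma compactness Gamma A}). Second, for fixed $(m,\mu)$, $\xi \mapsto J^\lambda(\xi, m, \mu)$ is strictly concave: the non-entropic part is linear in $\xi$, while the entropy term is strictly concave by Assumption \ref{assumption entropy}. Third, by Assumption \ref{main assump OS} combined with the $L^p$-estimates of Lemma \ref{lemma a priori estimates}, $(m,\mu) \mapsto J^\lambda(\xi, m, \mu)$ is $\rho$-Lipschitz on $\Gamma(\mathcal A)$, uniformly in $\xi \in \mathcal A$. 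Fourth, linearity of $\Gamma$ in $\xi$ yields $\rho\big((\bar m^{k+1}, \bar \mu^{k+1}),(\bar m^k, \bar \mu^k)\big) \leq C/(k+1)$. Writing $A_k := J^\lambda(\bar \xi^k, \bar m^k, \bar \mu^k)$ and exploiting the averaging rule $\bar \xi^{k+1} = \tfrac{k}{k+1}\bar \xi^k + \tfrac{1}{k+1}\xi^{k+1}$, concavity in $\xi$ yields $J^\lambda(\bar \xi^{k+1}, \bar m^k, \bar \mu^k) \geq A_k + \tfrac{\Phi_k}{k+1}$; shifting the evaluation to $(\bar m^{k+1}, \bar \mu^{k+1})$ via the Lipschitz estimate produces $A_{k+1} \geq A_k + \tfrac{\Phi_k}{k+1} - \tfrac{C}{k+1}$, and an analogous estimate (plugging $\bar \xi^k$ into the maximum defining $\Phi_{k+1}$) gives $\Phi_{k+1} \leq \tfrac{k}{k+1}\Phi_k + \tfrac{C}{k+1}$. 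These inequalities already yield $\sup_k \Phi_k < \infty$. To upgrade to $\Phi_k \to 0$, I would invoke Lasry-Lions at $(\xi^{k+1}, \bar \xi^k)$ to absorb the $C/(k+1)$ error into a negative monotonicity term of order $V_k/(k+1)$, producing a Lyapunov-type recursion for $V_k$ of the form $V_{k+1} \leq (1 - c/(k+1)) V_k + o(1/k)$, from which a discrete-Gronwall / telescoping argument forces $V_k \to 0$, and then $\Phi_k \to 0$ follows by bootstrapping.

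To identify the limit, take any subsequence with $\bar \xi^{n_j} \to \xi^*$ (by compactness of $\mathcal A$); then $(\bar m^{n_j}, \bar \mu^{n_j}) \to \Gamma(\xi^*)$ by Lemma \ref{lemma closedness Gamma A}. Passing to the limit in $V_{n_j} \to 0$ using the joint continuity of $J^\lambda$ established inside the proof of Lemma \ref{lemma continuity best response}, plus a further use of Lasry-Lions, shows that $\xi^*$ maximizes $\xi \mapsto J^\lambda(\xi, \hat m^\lambda, \hat \mu^\lambda)$; strict concavity and uniqueness then force $\xi^* = \hat \xi^\lambda$, and since every subsequential limit is the same, the full sequence converges. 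The hardest step is the upgrade $V_k \to 0$: a crude Lipschitz bound on the measure perturbation contributes $1/(k+1)$ per iteration, whose partial sums diverge, so boundedness of $A_k$ alone does not imply decay. Closing the argument genuinely requires the Lasry-Lions structure to cancel the leading part of this error -- the discrete-time analogue of the potential energy inequality driving continuous-time fictitious play -- and this is where the sharp cross-Lipschitz hypotheses of Assumption \ref{main assump OS} enter in an essential way.
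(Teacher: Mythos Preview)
Your overall architecture is sound: track exploitability $\Phi_k$, bound the equilibrium gap $V_k \leq \Phi_k$ via Lasry--Lions applied to $(\hat\xi^\lambda,\bar\xi^k)$, and then identify the limit by compactness plus strict concavity. The limit-identification step is close to the paper's (which instead passes to the limit in the $\varepsilon_n$-equilibrium inequality and invokes uniqueness). The real issue is the step you yourself flag as hardest.

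\textbf{The gap.} Your recursion $\Phi_{k+1}\leq \tfrac{k}{k+1}\Phi_k + \tfrac{C}{k+1}$ is correct but, as you note, only yields boundedness. Your proposed fix --- ``invoke Lasry--Lions at $(\xi^{k+1},\bar\xi^k)$ to absorb the $C/(k+1)$ error into a negative monotonicity term of order $V_k/(k+1)$'' --- does not work as stated. Assumption~\ref{assumption monotonicity}(\ref{assumption monotonicity: LL}) is purely a \emph{sign} condition: it says a certain quadratic form is $\leq 0$, not that it is $\leq -c\,\|\cdot\|^2$. There is no constant $c>0$ available to produce a contraction factor $(1-c/(k+1))$ in a recursion for $V_k$, so the Lyapunov/discrete-Gronwall scheme you sketch cannot be closed under the stated hypotheses.

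\textbf{How the paper closes it.} The paper works directly with $\varepsilon_n=\Phi_n$ and derives the sharper recursion
\[
\varepsilon_{n+1}-\varepsilon_n \;\leq\; -\frac{\varepsilon_n}{n+1} + \frac{\delta_n}{n},
\qquad \delta_n := C\big(d_1^M(m^{n+1},m^{n+2})+d_1(\mu^{n+1},\mu^{n+2})+\tfrac{1}{n}\big),
\]
via the decomposition $\varepsilon_{n+1}-\varepsilon_n=\varepsilon_n^{(1)}+\varepsilon_n^{(2)}$. Lasry--Lions enters only to kill a sign: after rewriting, one is left with $(n+1)\big[\langle f(\bar m^{n+1})-f(\bar m^n),\bar m^{n+1}-\bar m^n\rangle+\langle g(\bar\mu^{n+1})-g(\bar\mu^n),\bar\mu^{n+1}-\bar\mu^n\rangle\big]$, which is $\leq 0$ by monotonicity --- no quantitative lower bound is needed. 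The residual error is not a bare $C/n$ but $\delta_n/n$; the crucial point is that $\delta_n\to 0$. This follows because $(m^{n+1},\mu^{n+1})=\Gamma\circ R_\lambda(\bar m^n,\bar\mu^n)$ and $(m^{n+2},\mu^{n+2})=\Gamma\circ R_\lambda(\bar m^{n+1},\bar\mu^{n+1})$, while $\rho((\bar m^{n+1},\bar\mu^{n+1}),(\bar m^n,\bar\mu^n))\to 0$ and $\Gamma\circ R_\lambda$ is continuous on the compact $\Gamma(\mathcal A)$ (Lemma~\ref{lemma continuity best response}). The cross-Lipschitz clauses of Assumption~\ref{main assump OS} (the second and fourth inequalities) are precisely what allow the bound $\langle f(\bar m^{n+1})-f(\bar m^n),m^{n+2}-m^{n+1}\rangle \leq \tfrac{C}{n}\,d_1^M(m^{n+1},m^{n+2})$. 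Once the recursion with $\delta_n\to 0$ is in hand, a standard lemma (Hadikhanloo) gives $\varepsilon_n\to 0$.

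In short: the missing idea is not a stronger monotonicity, but the observation that the error term can be upgraded from $C/n$ to $o(1)/n$ by exploiting continuity of the best-response map on a compact set, together with the cross-Lipschitz structure of $f$ and $g$.
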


\begin{proof} 
The proof is organized in two steps.\\
\textit{Step 1.} We first introduce the \textit{exploitability errors} $(\varepsilon_n)_{n\geq 1}$ which are defined as follows:
\begin{align}\label{phi n OS}
\varepsilon_n&:= J^\lambda(\xi^{n+1},\bar{m}^n,\bar{\mu}^n)-J^\lambda(\bar{\xi}^n,\bar{m}^n,\bar{\mu}^n)\nonumber\\
&=\langle f(\bar m^n),m^{n+1}-\bar m^n\rangle 
+ \langle g(\bar \mu^n),\mu^{n+1}-\bar \mu^n\rangle+ \lambda \mathbb{E}\left[\int_0^T(\mathcal{E}(\xi_t^{n+1})-\mathcal{E}(\bar{\xi}_t^n))dt \right].    
\end{align}
It can be easily observed that, by construction, $(\bar{m}^n,\bar{\mu}^n)$ is a $\varepsilon_n$-MFG equilibria. We first show here that $\varepsilon_n \to 0$.
By the optimality of $\xi^{n+1}$, it is easy to observe that $\varepsilon_n \geq 0$, for all $n \geq 1$.

\noindent We write $\varepsilon_{n+1}-\varepsilon_{n}=\varepsilon_{n}^{(1)}+\varepsilon_{n}^{(2)}$,
where 
\begin{align}
\varepsilon_n^{(1)}&:= \langle f(\bar{m}^n),\bar{m}^n \rangle +\langle g(\bar{\mu}^n),\bar{\mu}^n \rangle- \langle f(\bar{m}^{n+1}),\bar{m}^{n+1} \rangle -\langle g(\bar{\mu}^{n+1}),\bar{\mu}^{n+1} \rangle \nonumber\\
&+\lambda \mathbb{E}\left[\int_0^T (\mathcal{E}(\bar{\xi_t}^n)-\mathcal{E}({\bar{\xi}_t}^{n+1}))dt\right],
\end{align}
\begin{align}
\varepsilon_n^{(2)}&:= \langle f(\bar{m}^{n+1}),m^{n+2} \rangle+\langle g(\bar{\mu}^{n+1}),\mu^{n+2} \rangle -\langle f(\bar{m}^n),m^{n+1} \rangle- \langle g(\bar{\mu}^n),\mu^{n+1} \rangle  \nonumber \\ &+\lambda \mathbb{E}\left[\int_0^T (\mathcal{E}(\xi_t^{n+2})-\mathcal{E}(\xi_t^{n+1})))dt\right].
\end{align}
In the following $C>0$ denotes a constant (independent of $n$) that may vary from line to line. We first estimate $\varepsilon_n^{(1)}$.
First, by the concavity of the entropy functional $\mathcal{E}(\cdot)$,
we get 
\begin{align}
\mathbb{E}\left[\int_0^T\left(\mathcal{E}(\bar{\xi}_t^n)-\mathcal{E}(\bar{\xi}_t^{n+1})\right) dt\right]&=\mathbb{E}\left[\int_0^T\left(\mathcal{E}(\bar{\xi}_t^n)-\mathcal{E}\left(\frac{n}{n+1}\bar{\xi}_t^n+\frac{1}{n+1}\xi^{n+1})\right)\right) dt\right] \nonumber \\ &\leq \frac{1}{n+1}\mathbb{E}\left[\int_0^T\left(\mathcal{E}(\bar{\xi}_t^n)-\mathcal{E}(\xi_t^{n+1})\right) dt\right].
\end{align}

We recall now the following estimate (see Lemma 2.14 in  \cite{dumitrescu.leutscher.tankov.2022linear}):
there exists a constant $C_2 > 0$ (independent of $n$) such that for all $t \in [0,T]$:
\begin{align}
\int_{[0,T] \times \mathbb{R}^d} (1+|x|^p)|\bar{\mu}^{n+1}-\bar{\mu}^n|(dt,dx) \leq \frac{C_2}{n},\,\,\, \int_{[0,T] \times \mathbb{R}^d} (1+|x|^p)|\bar{m}_t^{n+1}-\bar{m}_t^n|(dt,dx) \leq \frac{C_2}{n}.
\end{align}
We then derive, for all $t \in [0,T]$ and $x \in \mathbb{R}^d$,
\begin{align*}
|f(t,x,\bar m_t^{n+1})-f(t,x,\bar m_t^n)|  &\leq c_f (1+|x|)\int_{\mathbb{R}^d}(1+|z|^p)|\bar{m}_t^{n+1}-\bar{m}_t^n|(dz)\leq \frac{C}{n}(1+|x|).
\end{align*}
By using the a priori estimates given in Lemma \ref{lemma a priori estimates}, we deduce
\begin{align*}
-\frac{1}{n+1}\langle f(\bar m^{n+1})-f(\bar m^n), m^{n+1}-\bar m^n \rangle &\leq \frac{1}{n+1}\langle |f(\bar m^{n+1})-f(\bar m^n)|, m^{n+1}+\bar m^n \rangle\\
& \leq \frac{C}{n^2}.
\end{align*}
We obtain
\begin{align*}
&\langle f(\bar m^n), \bar m^n \rangle - \langle f(\bar m^{n+1}), \bar m^{n+1}\rangle = \langle f(\bar m^n), \bar m^n \rangle  \\
&\quad - \langle f(\bar m^{n+1}), \bar m^n + \frac{1}{n+1}(m^{n+1}-\bar m^n) \rangle \\
& = \langle f(\bar m^n) - f(\bar m^{n+1}), \bar m^n \rangle - \frac{1}{n+1}\langle f(\bar m^{n+1}), m^{n+1}-\bar m^n \rangle\\
&\leq \langle f(\bar m^n) - f(\bar m^{n+1}), \bar m^n \rangle - \frac{1}{n+1}\langle f(\bar m^n), m^{n+1}-\bar m^n \rangle + \frac{C}{n^2}.
\end{align*}
Similar to the above inequality, we also derive
\begin{align*}
&\langle g(\bar \mu^n), \bar \mu^n \rangle - \langle g(\bar \mu^{n+1}), \bar \mu^{n+1}\rangle \\
&\leq \langle g(\bar \mu^n) - g(\bar \mu^{n+1}), \bar \mu^n \rangle - \frac{1}{n+1}\langle g(\bar \mu^n), \mu^{n+1}-\bar \mu^n \rangle + \frac{C}{n^2}.
\end{align*}

\noindent Therefore, by combining the last three inequalities, we derive
\begin{align*}
\varepsilon_n^{(1)}  
& = \langle f(\bar m^n), \bar m^n \rangle + \langle g(\bar \mu^n), \bar \mu^n \rangle - \langle f(\bar m^{n+1}), \bar m^{n+1} \rangle - \langle g(\bar \mu^{n+1}), \bar \mu^{n+1} \rangle\\
&+\lambda \mathbb{E}\left[\int_0^T (\mathcal{E}(\bar{\xi_t}^n)-\mathcal{E}({\bar{\xi}_t}^{n+1}))\right]dt\\
&\leq \langle f(\bar m^n) - f(\bar m^{n+1}), \bar m^n \rangle + \langle g(\bar \mu^n) - g(\bar \mu^{n+1}), \bar \mu^n \rangle -\frac{\varepsilon_n}{n+1}+\frac{C}{n^2}.
\end{align*}
We shall now proceed with the estimation of $\varepsilon_n^{(2)}$. First notice that, since
\begin{align}
J^\lambda(\xi^{n+1},\bar{m}^n, \bar{\mu}^n) \geq J^\lambda(\xi,\bar{m}^n, \bar{\mu}^n), \,\, \forall \xi \in \mathcal{A},
\end{align}
it follows that
\begin{align}
& \langle f(\bar{m}^n),m^{n+1} \rangle+ \langle g(\bar{\mu}^n),\mu^{n+1} \rangle +\lambda \mathbb{E}\int_0^T \mathcal{E}(\xi^{n+1}_t)dt  \nonumber \\
& \geq \langle f(\bar{m}^n),m^{n+2} \rangle+ \langle g(\bar{\mu}^n),\mu^{n+2} \rangle +\lambda \mathbb{E}\int_0^T \mathcal{E}(\xi^{n+2}_t)dt.
\end{align}
We therefore have
\begin{align*}
\varepsilon_n^{(2)} & =\langle f(\bar{m}^{n+1}),m^{n+2} \rangle+\langle g(\bar{\mu}^{n+1}),\mu^{n+2} \rangle+\lambda \mathbb{E}\left[\int_0^T (\mathcal{E}(\xi_t^{n+2}) \right] \nonumber \\& -\langle f(\bar{m}^n),m^{n+1} \rangle- \langle g(\bar{\mu}^n),\mu^{n+1} \rangle- \lambda \mathbb{E}\left[\int_0^T\mathcal{E}(\xi_t^{n+1}))dt\right] \\
&\leq \langle f(\bar{m}^{n+1}),m^{n+2} \rangle+\langle g(\bar{\mu}^{n+1}),\mu^{n+2} \rangle -\langle f(\bar{m}^n),m^{n+2} \rangle- \langle g(\bar{\mu}^n),\mu^{n+2} \rangle\\
&=\langle f(\bar m^{n+1}) - f(\bar m^n), m^{n+2}\rangle + \langle g(\bar \mu^{n+1}) - g(\bar \mu^n), \mu^{n+2}\rangle \\
&= \langle f(\bar m^{n+1}) - f(\bar m^n), m^{n+1}\rangle + \langle g(\bar \mu^{n+1}) - g(\bar \mu^n), \mu^{n+1}\rangle \\
&\quad + \langle f(\bar m^{n+1}) - f(\bar m^n), m^{n+2}-m^{n+1}\rangle + \langle g(\bar \mu^{n+1}) - g(\bar \mu^n), \mu^{n+2} - \mu^{n+1}\rangle.
\end{align*}
By Lemma 2.15 in \cite{dumitrescu.leutscher.tankov.2022linear},  the following estimates hold true: there exist constants $C_f>0$ and $C_g>0$ such that for all $n\geq 1$
$$
\langle f(\bar m^{n+1}) - f(\bar m^n),m^{n+2}-m^{n+1}\rangle 
\leq \frac{C_f}{n}d_1^M(m^{n+1}, m^{n+2}),
$$
$$
\langle g(\bar \mu^{n+1}) - g(\bar \mu^n),\mu^{n+2}-\mu^{n+1}\rangle \leq \frac{C_g}{n}d_1(\mu^{n+1}, \mu^{n+2}).
$$
Therefore
\begin{align*}
\varepsilon_n^{(2)} &\leq \langle f(\bar m^{n+1}) - f(\bar m^n), m^{n+1}\rangle + \langle g(\bar \mu^{n+1}) - g(\bar \mu^n), \mu^{n+1}\rangle\\
&\quad +\frac{C}{n}(d_1^M(m^{n+1}, m^{n+2}) + d_1(\mu^{n+1}, \mu^{n+2})).
\end{align*}
We set
$$\delta_n:=C\left[ d_1^M(m^{n+1}, m^{n+2}) + d_1(\mu^{n+1}, \mu^{n+2}) +  \frac{1}{n}\right],$$
and we derive 
\begin{align*}
&\varepsilon_{n+1} - \varepsilon_n = \varepsilon_n^{(1)} + \varepsilon_n^{(2)} \leq \langle f(\bar m^n) - f(\bar m^{n+1}), \bar m^n \rangle + \langle g(\bar \mu^n) - g(\bar \mu^{n+1}), \bar \mu^n \rangle -\frac{\varepsilon_n}{n+1}\\
&\quad + \langle f(\bar m^{n+1}) - f(\bar m^n), m^{n+1} \rangle + \langle g(\bar \mu^{n+1}) - g(\bar \mu^n), \mu^{n+1} \rangle + \frac{\delta_n}{n}\\
& = \langle f(\bar m^{n+1}) - f(\bar m^n), m^{n+1} - \bar m^n\rangle + \langle g(\bar \mu^{n+1}) - g(\bar \mu^n), \mu^{n+1} - \bar \mu^n \rangle -\frac{\varepsilon_n}{n+1} + \frac{\delta_n}{n}+\varepsilon_n^{(3)}\\
& = (N+1)\left[ \langle f(\bar m^{n+1}) - f(\bar m^n), \bar m^{n+1} - \bar m^n\rangle + \langle g(\bar \mu^{n+1}) - g(\bar \mu^n), \bar \mu^{n+1} - \bar \mu^n \rangle  \right] \\
&\quad -\frac{\varepsilon_n}{n+1} + \frac{\delta_n}{n}\\
&\leq -\frac{\varepsilon_n}{n+1} + \frac{\delta_n}{n}.
\end{align*}
The last inequality follows from the Lasry–Lions monotonicity condition. 
By using the same arguments as in Corollary 2.17 in \cite{dumitrescu.leutscher.tankov.2022linear} with the continuous  map $\Gamma \circ R_\lambda$ from $(\mathcal{M}^\star, d_1^M \otimes d_1)$ to $(\mathcal{M}^\star, d_1^M \otimes d_1)$, we derive that
\begin{align}
\lim_{n\rightarrow\infty} d_1(\mu^n,\mu^{n+1})=0,\,\,\lim_{n\rightarrow\infty} d_1^M(m^n,m^{n+1})=0.
\end{align}
Thus $\delta_n\rightarrow 0$. By Lemma 3.1 in \cite{hadikhanloo2017learning}, we conclude that $\varepsilon_n\rightarrow 0$ as $n\rightarrow \infty$. The result follows. 

{\textit{Step 2.} We show now that the sequence $(\bar{\xi}^n, \bar{m}^n, \bar{\mu}^n)$ converges to the unique $\lambda$-SC-MFG equilibrium $(\hat{\xi}^\lambda,\hat{m}^\lambda, \hat{\mu}^\lambda).$ By compactness of the set $\mathcal{A} \times \Gamma(\mathcal{A})$, from any subsequence $(\bar{\xi}^{k_{n}}, \bar{m}^{k_{n}}, \bar{\mu}^{k_{n}})$ we can subtract a further subsequence (not relabeled) such that $(\bar{\xi}^{k_{n}}, \bar{m}^{k_{n}}, \bar{\mu}^{k_{n}})$ converges in the topology $\tau_2^w \otimes d_1^M \otimes d_1$ to  some $(\bar{\xi}, \bar{m},\bar{\mu})$, where $\tau_2^w$ represents the topology associated to the weak convergence in $\mathbb{L}^2(\Omega \times [0,T])$.
First, by using similar arguments as for the proof of \eqref{eq inside exist}, we can establish that
\begin{align}\label{consist}
(\bar{m},\bar{\mu})=\Gamma(\bar{\xi}).
\end{align}
Then, by optimality of $\xi_\cdot^{k_{n+1}}$, we get 
\begin{align}
J^{\lambda}(\xi^{k_{n+1}},\bar{m}^{k_{n}},\bar{\mu}^{k_n}) \geq J^{\lambda}(\xi,\bar{m}^{k_{n}},\bar{\mu}^{k_n}),\,\, \text{for all } \xi \in \mathcal{A}.
\end{align}
By definition of $\varepsilon_{k_n}$, we derive
\begin{align}
J^{\lambda}(\bar{\xi}^{k_{n}},\bar{m}^{k_{n}},\bar{\mu}^{k_n}) \geq J^{\lambda}(\xi,\bar{m}^{k_{n}},\bar{\mu}^{k_n})-\varepsilon_{k_n},\,\, \text{for all } \xi \in \mathcal{A}.
\end{align}
By using similar arguments as the ones used to establish \eqref{eq lemma continuity first limit}, \eqref{eq lemma continuity second limit} and \eqref{eq lemma continuity third limit} and by using the convergence of $(\varepsilon_n)$ to $0$ when $n \to \infty$, we get
\begin{align}
J^{\lambda}(\bar{\xi},\bar{m},\bar{\mu}) \geq J^{\lambda}(\xi,\bar{m},\bar{\mu}),\,\, \text{for all } \xi \in \mathcal{A}.
\end{align}
From the above inequality and relation \eqref{consist}, we conclude that $(\bar{\xi},\bar{m},\bar{\mu})$ is a $\lambda$-SC-MFG equilibrium. By uniqueness of the equilibrium, it coincides with $(\hat{\xi}^\lambda,\hat{m}^\lambda,\hat{\mu}^\lambda)$.
Since from any subsequence of $(\bar{\xi}^n, \bar{m}^n, \bar{\mu}^n)$ we can subtract a further subsequence which converges to the unique $\lambda$-SC-equilibrium, we conclude that the whole sequence converges to the unique MFG equilibrium.}
\end{proof}

\subsection{Fictitious play in the supermodular case} 
We now provide convergence of the fictitious play algorithm when the so-called supermodularity condition holds true (see, e.g., \cite{dianetti2022strong, dianetti.ferrari.fischer.nendel.2022unifying, vives.vravosinos.2024}).  
Such a property naturally appears in many financial and economic applications, as for example in models for bank runs (see \cite{carmona.delarue.lacker.2017.timing} and Example \ref{example on bank runs} below). 
The interested reader is referred to the textbook \cite{Vives01}  for further details on supermodular games.

\noindent We thus enforce the following structural condition:
\begin{assumption}[Supermodularity]\label{assumption supermod} 
The following hold true:
\begin{enumerate}
\item\label{assumption supermod: uniqueness R}
For any $(m,\mu)\in \mathcal{M}$, $R_\lambda (m,\mu)$ is a singleton.
\item\label{assumption supermod: supermod}  For each \( (t, x) \in [0, T] \times \mathbb{R}^d \), the function \( f(t, x, \cdot) \) is nondecreasing with respect to the measure argument in the following sense: for any two measures \( m, \bar{m} \in \mathcal P _p^{sub} (\R^d) \), if \( m(A) \leq \bar{m}(A) \) for all \( A \in \mathcal{B}(\mathbb{R}^d) \), then
\[
f(t, x, m) \leq f(t, x, \bar{m}).
\]  
\item\label{assumption supermod: supermod g} We have $g(x,\mu)= \tilde g (x , \langle \psi, \mu\rangle )$, for some functions $\tilde g \in \mathcal C^2(\R^d \times \R)$ and $\psi \in \mathcal C^{1,2} ([0,T] \times \R^d)$ such that $(\partial _t + \mathcal L)  \psi \geq 0$ and $y \mapsto \mathcal L  \tilde g (x,y)$ is nondecreasing for any $x \in \R^d$.
\end{enumerate}
\end{assumption}
For the sake of illustration, we discuss some key examples.
\begin{example}
\label{example on bank runs}
An example in which the supermodularity condition is satisfied is when the function $g$ does not depend on $\mu$ (in this case, Condition \eqref{assumption supermod: supermod g} above is not needed) and 
$f (t,x,m)= f_0(t,x,1-m(\R^d))$, with $f_0(t,x,\cdot)$ nonincreasing for any $t,x$.
This setting represents supermodular MFGs with equilibrium condition
$$
\hat \tau \in \argmax_{\tau} \E \bigg[ \int_0^\tau f_0(t,X_t, \hat m_t) dt + g(X_\tau) \bigg]
\and
\hat m_t = \P [\hat \tau \leq t], 
$$
which are also referred to as MFGs of timing. 
The supermodularity condition is typical of models for bank runs \cite{carmona.delarue.lacker.2017.timing}.
Natural examples are of additive type $f_0 (t,x,y)= f_1(x) - f_2(y)$ or of multiplicative type $f_0 (t,x,y)= - f_1(x) f_2(y)$ (with $f_1 \geq 0$), for nondecreasing functions $f_1,f_2$. 
\end{example}

\begin{example}
The underlying process evolves as a one dimensional geometric Brownian motion
$$
dZ_t = Z_t (b_0 dt + \sigma_0 d W_t), \quad Z_0 = z_0 > 0, \quad \P \text{-a.s.,}
$$
with $b_0\in \R$, $\sigma_0>0$, and the payoffs are
$$
f(t,z,m) = \int_{\R} (z+y) m(dy)
\and
g(t,z,\mu) = \int_{\R \times [0,T]} (t+s) \mu(ds, dy).
$$
With respect to the previous notation, the state process becomes $X_t:=(t,Z_t)$ so that time-dependence in $g$ is allowed. 
\end{example}

Before discussing the main result of this subsection, we recall that the controls $\xi$ are interpreted as probability measures; in particular, $\xi_t$ is the probability of stopping before time $t$.
In this sense, if $\xi_t \geq \bar \xi_t$ for any $t\in [0,T]$, $\mathbb P$-a.s., then $\tau^\xi \leq \tau ^{\bar \xi}$ $\mathbb P$-a.s., and we say that $\xi$ is earlier than $\bar \xi$ (or that $\bar \xi$ is later than $\xi)$ .
Given two equilibria $(\xi, m, \mu)$ and $(\bar \xi, \bar m, \bar \mu)$, we say that the first equilibrium is earlier (resp.\ later) that the second one if $\xi$ is earlier (resp.\ later) that $\bar \xi$.
Given $\lambda \geq 0$, the earliest equilibrium   $(\underline \xi^\lambda, \underline m^\lambda, \underline \mu^\lambda)$ and the latest equilibrium 
$(\overline \xi ^\lambda, \overline m^\lambda, \overline \mu^\lambda)$
are such that 
$\underline \xi^\lambda_t \geq \xi_t \geq \overline \xi^\lambda_t$ for any $t\in [0,T]$, $\mathbb P$-a.s.,
for any other equilibrium $(\xi, m, \mu)$.

The following theorem discusses the existence and approximation of the earliest and latest equilibria. 
 
\begin{theorem}\label{theorem fictitious supermodular}
Under Assumptions \ref{assumption} and \ref{assumption supermod}, the earliest equilibrium   $(\underline \xi^\lambda, \underline m^\lambda, \underline \mu^\lambda)$ and the latest equilibrium $(\overline \xi^\lambda, \overline m^\lambda, \overline \mu^\lambda)$ exist. 
Moreover, the following statements hold true: 
\begin{enumerate}
    \item With initialization $\bar \xi^0_t =1$ for any $t \in [0,T]$, the sequence $\bar \xi^n$ is nonincreasing, $\bar \xi^{n+1} \leq \bar \xi ^n$, and $(\bar \xi^n, \bar m^n, \bar \mu^n)$   converges to the earliest equilibrium   $(\underline \xi^\lambda, \underline m^\lambda, \underline \mu^\lambda)$.
    \item With initialization $\bar \xi^0_t  =0$ for any $t \in [0,T]$, the sequence $\bar \xi^n$ is nondecreasing, $\bar \xi^{n+1} \geq \bar \xi ^n$, and $(\bar \xi^n, \bar m^n, \bar \mu^n)$ converges to the latest equilibrium $(\overline \xi^\lambda, \overline m^\lambda, \overline \mu^\lambda)$.
\end{enumerate}
\end{theorem}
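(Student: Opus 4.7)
The plan is to prove that the operator $\mathcal R_\lambda := R_\lambda \circ \Gamma : \A \to \A$ is monotone with respect to the pointwise partial order on $\A$, and then to use this property to show that the fictitious-play iterates form monotone sequences converging to the extremal equilibria.

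The first step is to rewrite the payoff in a form that isolates its dependence on $\xi$. Under Assumption \ref{assumption supermod: supermod g}, $g(x, \mu) = \tilde g(x, \langle \psi, \mu\rangle)$ with $\tilde g \in \mathcal C^2$. Applying Itô's formula to $\tilde g(X_t, \langle \psi, \mu\rangle)$ (with $\mu$ fixed) and integrating by parts in the Stieltjes integral $\int_{[0,T]} g(X_t, \mu) d\xi_t$, the resulting stochastic integrals vanish in expectation and one obtains
\[
J^\lambda(\xi, m, \mu) = C(m, \mu) + \E\left[\int_0^T \bigl(-\xi_t \Phi^{m, \mu}_t + \lambda \mathcal E(\xi_t)\bigr) dt \right],
\]
where $\Phi^{m, \mu}_t := f(t, X_t, m_t) + \mathcal L \tilde g(X_t, \langle \psi, \mu\rangle)$ and $C(m, \mu)$ does not depend on $\xi$. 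If $\xi \leq \bar \xi$ pointwise, then the definition of $\Gamma$ gives $m^\xi \geq m^{\bar \xi}$ setwise, while integration by parts together with $(\partial_t + \mathcal L)\psi \geq 0$ yields $\langle \psi, \mu^\xi\rangle \geq \langle \psi, \mu^{\bar \xi}\rangle$. Combining with the monotonicity of $f$ in the measure (Assumption \ref{assumption supermod: supermod}) and of $\mathcal L \tilde g$ in $y$ then gives $\Phi^{\Gamma(\xi)} \geq \Phi^{\Gamma(\bar \xi)}$.

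The core step is to show that $\Phi^1 \geq \Phi^2$ (pointwise) implies $\xi^{1, *} \leq \xi^{2, *}$, where $\xi^{i, *}$ denotes the unique maximizer on $\A$ of the reduced functional with cost $\Phi^i$. Observe that $\A$ is a lattice under pointwise $\vee$ and $\wedge$, and the reduced functional is modular in $\xi$, since $\xi^\vee_t + \xi^\wedge_t = \xi^1_t + \xi^2_t$ and, pointwise, $\mathcal E(\xi^\vee_t) + \mathcal E(\xi^\wedge_t) = \mathcal E(\xi^1_t) + \mathcal E(\xi^2_t)$. Arguing by contradiction, suppose $\tilde B := \{\xi^{1, *} > \xi^{2, *}\}$ has positive product measure. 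Setting $\xi^\vee := \xi^{1, *} \vee \xi^{2, *}$ and $\xi^\wedge := \xi^{1, *} \wedge \xi^{2, *}$, we have $\xi^\vee \neq \xi^{2, *}$ and $\xi^\wedge \neq \xi^{1, *}$, and by uniqueness of the optimizer (Assumption \ref{assumption supermod: uniqueness R}) the strict inequalities $J^{\Phi^1}(\xi^{1, *}) > J^{\Phi^1}(\xi^\wedge)$ and $J^{\Phi^2}(\xi^{2, *}) > J^{\Phi^2}(\xi^\vee)$ hold. Summing these strict inequalities and using modularity (which cancels the entropy contributions), the sum of the left-hand sides minus the sum of the right-hand sides equals
\[
-\E\left[\int_0^T \mathds 1_{\tilde B}(\xi^{1, *}_t - \xi^{2, *}_t)(\Phi^1_t - \Phi^2_t) dt \right] \leq 0,
\]
contradicting strict positivity. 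Hence $\xi^{1, *} \leq \xi^{2, *}$, and $\mathcal R_\lambda$ is monotone.

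With monotonicity of $\mathcal R_\lambda$, the fictitious-play iterates starting from $\bar \xi^0_t = 1$ satisfy $\xi^1 = \mathcal R_\lambda(\bar \xi^0) \leq \bar \xi^0$ trivially, and by induction $\xi^{n+1} = \mathcal R_\lambda(\bar \xi^n) \leq \mathcal R_\lambda(\bar \xi^{n-1}) = \xi^n$. Since $\bar \xi^n = \frac{1}{n}\sum_{\ell = 1}^n \xi^\ell$ is a Cesàro average of a nonincreasing sequence, $\xi^{n+1} \leq \bar \xi^n$, so $\bar \xi^{n+1} \leq \bar \xi^n$. The monotone bounded sequence $(\bar \xi^n)$ converges pointwise and, by dominated convergence, in $\mathbb L^2$ to some $\underline \xi^\lambda \in \A$; Cesàro yields $\xi^n \to \underline \xi^\lambda$ as well. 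Passing to the limit in $\xi^{n+1} = \mathcal R_\lambda(\bar \xi^n)$ via the continuity of $\mathcal R_\lambda$ (Lemma \ref{lemma continuity best response}) gives $\underline \xi^\lambda = \mathcal R_\lambda(\underline \xi^\lambda)$, so $(\underline \xi^\lambda, \Gamma(\underline \xi^\lambda))$ is an equilibrium. Any equilibrium $\xi^*$ satisfies $\xi^* \leq \bar \xi^0 = 1$ and, by monotonicity applied iteratively, $\xi^* \leq \xi^n$ for all $n$, whence $\xi^* \leq \underline \xi^\lambda$; this identifies $\underline \xi^\lambda$ as the earliest equilibrium. The symmetric argument from $\bar \xi^0_t = \mathds 1_{\{t = T\}}$ produces a nondecreasing sequence converging to the latest equilibrium $\overline \xi^\lambda$. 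The main obstacle lies in the lattice-exchange argument, which requires both the modular structure of the entropy-regularized functional on $\A$ and the uniqueness of the best response guaranteed by Assumption \ref{assumption supermod: uniqueness R}.
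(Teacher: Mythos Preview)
Your proof is correct and follows essentially the same route as the paper's: integration by parts to reduce the payoff to a cost process $\Phi^{m,\mu}$, anti-monotonicity of $\Gamma$ and of $R_\lambda$ via the lattice structure on $\A$ (you argue by contradiction using modularity, the paper directly derives $J^\lambda(\xi\vee\xi',m',\mu')-J^\lambda(\xi',m',\mu')\geq J^\lambda(\xi,m,\mu)-J^\lambda(\xi\wedge\xi',m,\mu)$ and concludes $\xi\wedge\xi'=\xi$), and then a coupled induction on $(\xi^n,\bar\xi^n)$ followed by monotone convergence and a minimality comparison. The only imprecision is that Lemma \ref{lemma continuity best response} gives continuity of $R_\lambda$, not of $\mathcal R_\lambda = R_\lambda\circ\Gamma$; passing to the limit also requires continuity of $\Gamma$ along the strongly (monotone) convergent sequence $\bar\xi^n$, which the paper notes separately.
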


\begin{proof} 
We limit ourself to show the claim on the existence and approximation of the earliest equilibrium, as the proof for the latest equilibrium follows by the same argument.  
The proof is divided into three steps.
\smallbreak\noindent
\emph{Step 1.} 
In this step we show the some monotonicity properties of the maps $\Gamma$ and $R _\lambda$.

We first show anti-monotonicity property of the consistency map $\Gamma$.
Observe that,
if $\xi_t \leq \xi'_t, \, \forall t \in [0,T], \, \mathbb P$-a.s., from the definition of 
$m^\xi, m^{\xi'}$
have 
\begin{equation*}
     m^\xi_t (A) = \E [ \mathds{1}_A(X_t) (1-\xi_t) ] \geq \E [ \mathds{1}_A (X_t) (1- \xi'_t) ] =  m^{\xi'}_t (A), 
      \quad  \forall A \in \mathcal B (\R^d), \ t \in [0,T].
\end{equation*}
Moreover, for $\psi$ as in Assumption \ref{assumption supermod}, by using integration by parts and  that $\xi_T =1$, we have
$$
\langle \psi, \mu^{\xi} \rangle  :=  \E\bigg[\int_{[0,T]} \psi(t,X_t) d\xi_t\bigg] = \E \bigg[ \psi (T,X_T) - \int_0^T \xi_t ( \partial _t + \mathcal L ) \psi (t, X_t) dt \bigg],
$$
and the analogous expression can be obtained for  $\xi'$. 
Thus, since $ (\partial _t + \mathcal L )\psi \geq 0$, we find
\begin{align*}
    \langle \psi, \mu^{\xi} \rangle  &= \E \bigg[ \psi (T,X_T) - \int_0^T \xi_t ( \partial _t + \mathcal L )   \psi (t, X_t) dt \bigg] \\
    & \geq \E \bigg[ \psi (T,X_T) - \int_0^T \xi' _t ( \partial _t + \mathcal L ) \psi (t, X_t) dt \bigg] = \langle \psi, \mu^{\xi'} \rangle . 
\end{align*}
To summarize, we have found that 
\begin{equation}\label{eq projection decreasing}
\text{if $\xi_t \leq \xi'_t, \, \forall t, \, \mathbb P$-a.s., \ \ then $ m^\xi_t (A)  \geq   m^{\xi'}_t (A), 
      \,  \forall A, t,$ and $\langle \psi, \mu^{\xi} \rangle   \geq 
      \langle \psi, \mu^{\xi'} \rangle$,}
\end{equation}
which is the desired anti-monotonicity of $\Gamma$.

We next show the anti-monotonicity of the best reply map $R _\lambda$; that is, 
for any $m, m'$, 
\begin{equation}
    \label{eq monotonicity best reply}
    \text{if $ m_t (A)  \geq   m'_t (A), 
      \,  \forall A, t,$ and $\langle \psi, \mu  \rangle   \geq 
      \langle \psi, \mu' \rangle$, \ \ then $ R _\lambda (m, \mu)_t \leq  R _\lambda (m ', \mu')_t, \, \forall t, \, \mathbb P$-a.s. }
\end{equation}
 To see this, set $\xi : =  R _\lambda (m, \mu), \,  \xi':=  R _\lambda (m ' , \mu')$,  and define the processes
 $$
 \xi \land \xi':= \big( \min\{ \xi_t, \xi'_t \} \big)_t
 \and
 \xi \lor \xi':= \big( \max\{ \xi_t, \xi'_t \} \big)_t.
 $$
For a generic process $\zeta \in \mathcal A$, using integration by parts for the integral in $d \zeta$, we find
$$
J^{\lambda}(\zeta, m,\mu)  = \E \bigg[ - \int _0^T \big( f(t,X_t,m_t) + \mathcal L g (X_t,\mu) \big) \zeta_t dt + \int_0^T \big( f(t,X_t,m_t) +\lambda \mathcal E (\zeta_t) \big)  dt + g(X_T, \mu) \bigg],
$$
and, defining $\hat f (t,x,m,\mu) =  f(t,x,m) + \mathcal L g (x,\mu) $, we rewrite the latter expression as
$$
J^{\lambda}(\zeta, m,\mu)  = \E \bigg[ - \int _0^T \hat f(t,X_t,m_t,\mu) \zeta_t dt + \int_0^T \big( f(t,X_t,m_t) +\lambda \mathcal E (\zeta_t) \big)  dt + g(X_T, \mu) \bigg].
$$
Now, noticing that $\xi'_t -\xi_{t} \land \xi'_t = \xi_t \lor \xi'_t-\xi_{t}$ and that $\mathcal E ( \xi_{t} \lor \xi'_t ) - \mathcal E ( \xi'_t) = \mathcal E (\xi_{t} ) -\mathcal E ( \xi_t \land \xi'_t )$, we first find
$$
\begin{aligned}
&  J^{\lambda}(\xi \lor \xi', m',\mu')  - J^{\lambda}(\xi', m',\mu')  \\
&\quad
=  \mathbb{E}\left[\int_{0}^{T}  \big(  \hat f (t,X_{t}, m'_{t}, \mu ' ) \left( \xi'_t  - \xi_{t} \lor \xi'_t \right) 
+ \lambda ( \mathcal E ( \xi_{t} \lor \xi'_t ) - \mathcal E ( \xi'_t) ) \big)  dt \right] \\
&\quad
=  \mathbb{E}\left[\int_{0}^{T}  \big( \hat f (t,X_{t}, m'_{t}, \mu ' ) \left( \xi_t \land \xi'_t-\xi_{t} \right) 
+ \lambda ( \mathcal E (\xi_{t} ) -\mathcal E ( \xi_t \land \xi'_t ) ) \big)  dt \right] .
\end{aligned}
$$
Thus, if $ m_t (A)  \geq   m'_t (A), 
      \,  \forall A, t,$ and $\langle \psi, \mu  \rangle   \geq 
      \langle \psi, \mu' \rangle$,
from Conditions \ref{assumption supermod: supermod} and \ref{assumption supermod: supermod g} in Assumption \ref{assumption supermod} we have
$$
\begin{aligned}
&  J^{\lambda}(\xi \lor \xi', m', \mu')  - J^{\lambda}(\xi', m', \mu')  \\
  &\quad 
  \geq \mathbb{E}\left[\int_{0}^{T} \big( \hat f (t,X_{t}, m_{t}, \mu_t ) \left( \xi'_t \land \xi_t-\xi_{t}\right) + \lambda ( ( \mathcal E (\xi_{t} ) -\mathcal E ( \xi_t \land \xi'_t ) ) \big) dt \right] \\
& \quad 
= J^{\lambda}(\xi, m, \mu)  - J^{\lambda}(\xi \land \xi ', m, \mu).
\end{aligned}
$$
Hence, from the optimality of $\xi'$ for $(m',\mu')$ and the fact that $\xi \lor \xi' \in \A$, we deduce that
$$
0\geq  J^{\lambda}(\xi \lor \xi', m', \mu')  - J^{\lambda}(\xi', m', \mu')
\geq J^{\lambda}(\xi, m,\mu)  - J^{\lambda}(\xi \land \xi ', m,\mu).
$$
This  in turn implies that $\xi \land \xi'
= R_\lambda (m,\mu)$, and, by uniqueness of the optimizer as in Condition \ref{assumption supermod: uniqueness R} in Assumptoin \ref{assumption supermod}, we conclude that $\xi \land \xi' = \xi$, thus proving \eqref{eq monotonicity best reply}.

\smallbreak\noindent
\emph{Step 2.} 
In this step we show the monotonicity of $\bar \xi ^n$ by an induction argument.
To simplify the notation, we will write $m \geq m'$ instead of $ m_t (A)  \geq   m'_t (A), 
      \,  \forall A, t$.
Moreover, 
$(m^k,\mu^k):= \Gamma(\xi^k)$, so that (due to the linear dependence of the measure $(m,\mu)$ with respect to $\xi$) we have the relation 
$$
(\bar{\mu}^{n}, \bar{m}^{n})=\frac{1}{n}  \sum_{k=1}^{n}({\mu}^{\xi^k}, {m}^{\xi^k}) = \frac{1}{n} \sum_{k=1}^{n}({\mu}^{k},  {m}^{k}),
$$
that will be used several times in the sequel.

We proceed with an induction argument.
Since  $\bar \xi ^0 \equiv 1$ and $(\bar m ^0, \bar \mu ^0) =( m^{\xi^0}, \mu^{\xi^0})$, we have
$$   
\begin{aligned} 
\xi^1 &= R_\lambda (\bar m ^0, \bar \mu ^0) \leq \bar \xi^0, \\
(m^1,\mu^1) &= (m^{\xi^1},\mu^{\xi^1}),
\\
\bar m ^1 &= m^1 = m^{\xi^1} \geq m^{ \bar \xi^0} =  \bar m ^0, \\ 
\langle \psi, \bar \mu ^1 \rangle &=  \langle \psi, \mu^1 \rangle =   \langle \psi, \mu^{\xi^1} \rangle \geq \langle \psi, \mu^{\bar \xi^0} \rangle = \langle \psi, \bar \mu ^0 \rangle.
\end{aligned}
$$
Hence, by the monotonicity of $R_\lambda$ in \eqref{eq monotonicity best reply} and of $\Gamma$ in \eqref{eq projection decreasing}, we have
$$
\begin{aligned}
    \xi^2 &= R_\lambda (\bar m ^1, \bar \mu ^1) \leq R_\lambda (\bar m ^0, \bar \mu ^0) = \xi^1, \\
m^2 &= m^{\xi^2} \geq m^{\xi^1} = m^1, \\
\langle \psi,  \mu ^2 \rangle &=  \langle \psi, \mu^{\xi^2} \rangle \geq  \langle \psi, \mu^{\xi^1} \rangle = \langle \psi,  \mu ^1 \rangle , \\
\bar m ^2 &= \frac12 (m^2 +  m^1) \geq m^1 = \bar m ^1, \\ 
\langle \psi, \bar \mu ^2 \rangle &=  \langle \psi, \frac12 (\mu^2 +  \mu^1) \rangle 
\geq \langle \psi,  \mu ^1 \rangle
\geq \langle \psi, \bar \mu ^0 \rangle.
\end{aligned}
$$
Assume that for a generic $n$ we have
$$
\begin{aligned}
\xi^n & \leq \xi^{n-1} \leq \dots \leq \xi^1, \\
m^n & =  m^{\xi^n} \geq m^{n-1} \geq \dots \geq  m^1, \\
\langle \psi,  \mu ^n \rangle &=  \langle \psi, \mu^{\xi^n} \rangle \geq  \langle \psi, \mu^{n-1} \rangle \geq \dots \geq  \langle \psi,  \mu ^1 \rangle , \\
\bar m ^n &=\frac{1}{n} \sum_{k=1}^n m^k \geq \bar m ^{n-1} \geq \dots \geq \bar m ^1, \\ 
\langle \psi, \bar \mu ^n \rangle &=  \langle \psi, \frac{1}{n} \sum_{k=1}^n \mu^k \rangle 
\geq \langle \psi,  \bar \mu ^{n-1} \rangle
\geq \dots \geq  \langle \psi, \bar \mu ^1 \rangle.
\end{aligned}
$$
Then, the monotonicity of $R_\lambda$ in \eqref{eq monotonicity best reply} and of $\Gamma$ in \eqref{eq projection decreasing},  again gives
$$
\begin{aligned}
\xi^{n+1} &= R_\lambda (\bar m ^n, \bar \mu ^n) \leq R_\lambda (\bar m ^{n-1}, \bar \mu ^{n-1}) = \xi^n, \\
m^{n+1} &=  m^{\xi^{n+1}} \geq m^{\xi^{n}} = m^{n} \geq \dots \geq m^1, \\
\langle \psi,  \mu ^{n+1} \rangle &=  \langle \psi, \mu^{\xi^{n+1}} \rangle \geq  \langle \psi, \mu^{\xi^{n}} \rangle = \langle \psi, \mu^{{n}} \rangle \geq \dots \geq \langle \psi,  \mu ^1 \rangle .
\end{aligned}
$$
Moreover, from the last two inequalities we deduce that
$$
\begin{aligned}
m^{n+1} \geq \bar m ^n =\frac{1}{n} \sum_{k=1}^n m^k
\and
\langle \psi,  \mu ^{n+1} \rangle \geq 
\langle \psi, \bar \mu ^{n} \rangle = \langle \psi, \frac{1}{n} \sum_{k=1}^n \mu^k \rangle,
\end{aligned}
$$
which allows to conclude that
$$
\begin{aligned}
\bar m ^{n+1} & = \frac1{n+1} (m^{n+1} + n \, \bar m ^n) \geq \bar m ^n, \\
\langle \psi, \bar \mu ^{n+1} \rangle & = \langle \psi, \frac1{n+1} (\mu^{n+1} + n \, \bar \mu ^n) \rangle \geq \langle \psi, \bar \mu ^{n} \rangle,
\end{aligned}
$$
thus completing the induction argument. 

Finally, the monotonicity of the sequence $\bar \xi^n$ follows from the monotonicity of $\xi ^n$. 

\smallbreak\noindent
\emph{Step 3.} 
 We next define (thanks to the monotonicity of $\xi^n$ and $\bar \xi^n$) the limit points
$$ 
\underline \xi := \inf_n \xi^n  = \inf_n \bar \xi^n, \quad 
\underline m := m^{\underline \xi},
\and
\underline \mu := \mu^{\underline \xi},
$$ 
and show that these coincide with the minimal equilibria $(\underline \xi ^\lambda , \underline m ^ \lambda, \underline \mu ^ \lambda )$.

Notice that the sequence $(\bar \xi ^n)_n$ converges strongly. Moreover, by monotonicity, it is easy to see that the consistency map $\Gamma $ is continuous along this sequence.
Therefore,  we have
$$
(\underline m, \underline \mu) = (m^{\underline \xi}, \mu^{\underline \xi} ) = \Gamma (\underline \xi) = \lim_n \Gamma (\bar \xi^n) = \lim_n (\bar m^n, \bar \mu^n),
$$
so that the sequence $(\bar m ^n, \bar \mu ^n)_n$ converges to $ (\underline m, \underline \mu)$ as well.
Hence, the continuity of $R_\lambda$ in turn implies that
$$
\underline \xi  = \lim _n R_\lambda (\bar m ^n, \bar \mu ^n) = R_\lambda (\underline m, \underline \mu), 
$$
which proves the optimality. 
This show that $(\underline \xi , \underline m, \underline \mu)$ is a mean-field equilibrium. 

Finally, we show the minimality of $(\underline \xi , \underline m, \underline \mu)$.
Let $(\hat \xi, \hat m, \hat \mu)$ be another mean-field equilibrium.
By definition of $\bar \xi ^0$, we have $\hat \xi \leq \bar \xi ^ 0$.
Hence, by monotonicity of $\Gamma$ in \eqref{eq projection decreasing}, we have $m^{\hat \xi} = \hat m \geq m^1 = m^{\xi^1} = \bar m^1$ and 
$\langle \psi, \mu^{\hat \xi} \rangle = \langle \psi,  \hat \mu \rangle \geq \langle \psi,  \mu^1 \rangle = \langle \psi,  \mu^{\xi^1}\rangle = \langle \psi, \bar \mu^1\rangle$. 
Hence, using the monotonicity of $R_\lambda$ so that $\hat \xi = R_\lambda (\hat m, \hat \mu) \leq R_\lambda (\bar m^1, \bar \mu^1) = \xi^2$ by monotonicity of $R_\lambda $.
Proceeding by induction, we conclude that
$$
\hat \xi \leq \bar \xi^n, \quad \forall n,
$$
so that $\hat \xi \leq \underline \xi =\inf_n \bar \xi^n$,
thus giving the desired minimality.
\end{proof}

\appendix

\section{Connection between the $\lambda$-singular control MFG problem and a $\lambda$-optimal stopping MFG problem}
\label{sec:App}




In this part, we present the connection between the singular mean-field problem $\lambda$-SC-MFG and a {\it new} type of OS-MFG problems, that we introduce below and we call $\lambda$-OS-MFG ($\lambda$-optimal stopping MFG). Theorem \ref{first_implic} below paves the way to the characterization of the optimal free boundary of the entropy regularized problem $\lambda$-SC-MFG, as it allows via probabilistic techniques to derive an integral equation uniquely satisfied by the boundary. This perspective is also insightful for learning algorithm development. A detailed analysis can be found in the companion paper \cite{jodietal.}.

Let $\bar{\mathcal{T}}$ denote the set of $\mathbb{F}^{x_0, W,U}$-stopping times with values in $[0,T]$, where the filtration  $\mathbb{F}^{x_0, W,U}$ has been introduced in Section \ref{secreg}. For ease of exposition, we restrict ourselves to the relative entropy $\mathcal E (z) = -z\log z$; the analysis for a general entropy function satisfying Assumption \ref{assumption entropy} is analogous.

\begin{definition}
\label{def:defOSequilibrium}
A triple $(\hat{m}^\lambda, \hat{\mu}^\lambda, \hat{\theta}^\lambda)$ is said to be an equilibrium to the $\lambda$-OS-MFG problem if:
\begin{align}
\begin{cases}
&\hat{\theta}^\lambda \in \argmax _{\theta \in \bar{\mathcal{T}}} \mathbb{E}\left[\int_{0}^{\theta}\left(f (t, X_{t}, \hat{m}^\lambda_t) -\lambda(1+\log U)\right) d t+g (X_{\theta}, \hat{\mu}^\lambda )\right] \nonumber \\
& \hat{m}^\lambda_{t}(A)=\mathbb{P}\left[X_{t} \in A, t \leq \hat{\theta}^\lambda\right],\,\, A \in \mathcal{B}(\mathbb{R}^d) \nonumber \\
& \hat{\mu}^\lambda(B \times[0, t])=\mathbb{P}\left[X_{\hat{\theta}} \in B, \hat{\theta}^\lambda \leq t\right]\,\, B \in \mathcal{B}(\mathbb{R}^d). \nonumber\\
\end{cases}
\end{align}
\end{definition}


Observe that, for any stopping time $\theta \in \bar{\mathcal{T}}$, since  $\theta$ is a $\mathbb{F}^{x_0,W,U}$-stopping time, then there exists a measurable function, still denoted by $\theta$,  such that $\theta=\theta(x_0, W,U)$. 

We now establish the first relation between the two game problems.
\begin{theorem}\label{first_implic}
     Let $(\hat{m}^\lambda, \hat{\mu}^\lambda, \hat{\xi}^\lambda)$ be an equilibrium to the $\lambda$-SC-MFG.
     Define
     \begin{align}\label{def0}
\hat{\theta}^\lambda:=\theta^{\lambda}(\cdot,1-U),
     \end{align}
     where 
     \begin{align}
\theta^\lambda(\cdot,u):=\inf \{t \geq 0| \hat{\xi}^\lambda_t \geq u\}.
     \end{align}
\noindent Then $(\hat{m}^\lambda, \hat{\mu}^\lambda, \hat{\theta}^\lambda)$ is an equilibrium to the $\lambda$-OS-MFG .
\end{theorem}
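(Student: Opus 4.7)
The plan is to check, in order, the two consistency conditions and then the optimality condition from Definition \ref{def:defOSequilibrium}. The overarching strategy is to invert the randomization $\xi \mapsto \tau^\xi$ introduced in Section \ref{secreg} at the level of $\mathbb F^{x_0,W,U}$-stopping times: to each $\theta \in \bar{\mathcal{T}}$ I would associate a singular control $\xi^\theta_t := \P[\theta \leq t \mid \cF^{x_0,W}_t]$, which lies in $\A$ after a c\`adl\`ag modification and which, by construction, reduces to $\hat\xi^\lambda$ when $\theta = \hat\theta^\lambda$. The optimality of $\hat\xi^\lambda$ in the $\lambda$-SC-MFG will then transfer to optimality of $\hat\theta^\lambda$ in the $\lambda$-OS-MFG via a comparison between the two reward functionals.

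For the consistency conditions, I would first use the independence of $U$ from $(x_0, W)$ to obtain
\begin{equation*}
\P[\hat\theta^\lambda \leq t \mid \cF^{x_0,W}_t] = \P[U \geq 1-\hat\xi^\lambda_t \mid \cF^{x_0,W}_t] = \hat\xi^\lambda_t.
\end{equation*}
Replicating the computations leading to \eqref{eq consistency m singular control} and \eqref{eq consistency mu singular control}, but with $\hat\theta^\lambda$ in place of $\tau^\xi$, I then get $\P[X_t \in A, t \leq \hat\theta^\lambda] = \E[\mathds 1_{X_t \in A}(1-\hat\xi^\lambda_t)]$ for $dt$-a.e.\ $t$, and $\P[X_{\hat\theta^\lambda} \in B, \hat\theta^\lambda \leq t] = \E[\int_{[0,t]}\mathds 1_{X_s \in B}\,d\hat\xi^\lambda_s]$. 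Invoking the $\lambda$-SC-MFG equilibrium consistency of $(\hat\xi^\lambda, \hat m^\lambda, \hat\mu^\lambda)$, these equal $\hat m^\lambda_t(A)$ and $\hat\mu^\lambda(B \times [0,t])$, as required.

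For optimality, I take any $\theta \in \bar{\mathcal{T}}$. Since $U$ is $\cF_0^{x_0,W,U}$-measurable and independent of $(x_0, W)$, I can disintegrate $\theta = \theta(x_0, W, U)$ so that $\theta^u := \theta(\cdot, u) \in \mathcal T$ for $du$-a.e.\ $u$, and hence $\xi^\theta_t = \int_0^1 \mathds 1_{\theta^u \leq t}\,du$ belongs to $\A$ (it is nondecreasing, $[0,1]$-valued, and satisfies $\xi^\theta_T = 1$ after the usual c\`adl\`ag modification). A Fubini argument on the running reward and the pushforward identity $\int_0^1 g(X_{\theta^u}, \hat\mu^\lambda)\,du = \int_{[0,T]}g(X_s, \hat\mu^\lambda)\,d\xi^\theta_s$ on the terminal reward allow to rewrite the $\lambda$-OS-MFG payoff of $\theta$ as $J^\lambda(\xi^\theta, \hat m^\lambda, \hat\mu^\lambda)$ minus the discrepancy $\lambda \int_0^T \mathcal E(\xi^\theta_t)\,dt + \lambda\E[\int_0^1 \theta^u(1+\log u)\,du]$. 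An integration-by-parts identity via the right-continuous inverse $Q$ of $\hat\xi^\lambda$, combined with a Hardy--Littlewood rearrangement inequality coupling $\theta^u$ with $\log u$, would yield
\begin{equation*}
\lambda \int_0^T \mathcal E(\xi^\theta_t)\,dt + \lambda \E\Big[\int_0^1 \theta^u(1+\log u)\,du\Big] \geq 0,
\end{equation*}
with equality precisely when $\theta^u$ is the anti-monotone rearrangement of $Q$ in $u$, i.e., when $\theta^u = \theta^\lambda(\cdot, 1-u)$. Combining this with the $\lambda$-SC-MFG optimality of $\hat\xi^\lambda$ and the identity $\xi^{\hat\theta^\lambda} = \hat\xi^\lambda$ established above, one concludes that for every $\theta \in \bar{\mathcal{T}}$,
\begin{equation*}
\E\Big[\int_0^\theta (f - \lambda(1+\log U))\,dt + g(X_\theta,\hat\mu^\lambda)\Big] \leq J^\lambda(\xi^\theta, \hat m^\lambda, \hat\mu^\lambda) \leq J^\lambda(\hat\xi^\lambda, \hat m^\lambda, \hat\mu^\lambda) = \E\Big[\int_0^{\hat\theta^\lambda} (f - \lambda(1+\log U))\,dt + g(X_{\hat\theta^\lambda},\hat\mu^\lambda)\Big].
\end{equation*}

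I expect the rearrangement step to be the main obstacle: both the integration-by-parts identity relating the $\log u$-weighted integral of $Q$ to $\int_0^T \mathcal E(\hat\xi^\lambda_t)\,dt$ and the verification that the anti-monotone coupling realized by $\hat\theta^\lambda = \theta^\lambda(\cdot, 1-U)$ actually extremizes the penalty term among all $\theta \in \bar{\mathcal{T}}$ with the same associated $\xi^\theta$ require careful bookkeeping, and the exact form of the identity is sensitive to the choice $\mathcal E(z) = -z\log z$ versus a cumulative-residual variant. A secondary technical point is that $\hat\xi^\lambda$ may have jumps, so the strict vs.\ non-strict inequalities appearing in the definitions of $\tau^\xi$ and $\theta^\lambda(\cdot, u)$ must be tracked, but the discrepancy is negligible $dt$-a.e.\ since the set of jump times is at most countable.
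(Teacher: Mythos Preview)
Your consistency argument matches the paper's. For optimality, the routes diverge. The paper argues by \emph{contradiction}: assuming $\hat\theta^\lambda$ is not optimal, it takes the \emph{smallest} optimal stopping time $\bar\theta\in\bar{\mathcal T}$ and invokes a lemma of Baldursson--Karatzas to conclude that $u\mapsto\bar\theta(\cdot,u)$ is automatically decreasing; this built-in monotonicity lets it define $\bar\xi_t:=\sup\{u:\bar\theta(\cdot,1-u)\leq t\}$ as a genuine inverse and obtain \emph{equality} between the OS payoff of $\bar\theta$ and the singular-control functional of $\bar\xi$, giving a direct contradiction with the SC-MFG optimality of $\hat\xi^\lambda$. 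Your approach instead compares against an \emph{arbitrary} $\theta$, and must therefore absorb the possible non-monotonicity of $u\mapsto\theta^u$ through the Hardy--Littlewood rearrangement step. This is more self-contained (no structure theory of optimal stopping, no external lemma) but technically heavier, and---as you correctly flag---the closing identity is sensitive to the entropy: the calculation $\int_0^1\theta^{\downarrow}_u(1+\log u)\,du=\int_0^T(1-\xi^\theta_t)\log(1-\xi^\theta_t)\,dt$ means your ``discrepancy $\geq 0$'' only closes for $\mathcal E(z)=-(1-z)\log(1-z)$ rather than $-z\log z$. The paper's own final line (``contradicts the optimality of $\hat\xi^\lambda$'') has exactly the same hidden dependency, so this is not a defect of your route relative to theirs; with that reading of $\mathcal E$, both arguments are valid.
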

\begin{proof}

Let $(\hat{m}^\lambda, \hat{\mu}^\lambda, \hat{\xi}^\lambda)$ be an $\lambda$-SC-MFG equilibrium and let $\hat{\theta}^\lambda$ be given by $\eqref{def0}$. Observe that 
\begin{align}\label{eqq}
\{t < \hat{\theta}^\lambda\}=\{\hat{\xi}^\lambda_t < 1-U\}.
\end{align}
In view of this observation, we obtain:
\begin{align*}
&\mathbb{E}\left[\int_0^{\hat{\theta}^\lambda}f(t,X_t,\hat{m}^\lambda_t)dt\right]=\mathbb{E}\left[\int_0^T f(t,X_t,\hat{m}^\lambda_t)\mathds{1}_{\{t<\hat{\theta}^\lambda\}}dt\right]=\mathbb{E}\left[\int_0^T f(t,X_t,\hat{m}^\lambda_t)\mathbb{P}(U < 1-\hat{\xi}^\lambda_t|\mathcal{F}_t^{x_0,W})dt\right] \nonumber \\
&= \mathbb{E}\left[\int_0^T f(t,X_t,\hat{m}^\lambda_t)(1-\hat{\xi}^\lambda_t)dt\right]\end{align*}
Compute now
\begin{align*}
&\mathbb{E}\left[\int_0^{\hat{\theta}^\lambda}(1+\log(U))dt \right]=\mathbb{E}\left[\int_0^T \mathbb{E}\left[ (1+\log(U))\mathds{1}_{\{U < 1-\hat{\xi}^\lambda_t\}}|\mathcal{F}_t^{x_0,W} \right]dt\right] \nonumber \\
& = \mathbb{E} \left[\int_0^T \left(\int_{0}^{1-\hat{\xi}^\lambda_t}(1+\log(u))du\right) dt \right]=\mathbb{E}\left[\int_0^T (1-\hat{\xi}^\lambda_t)\log(1-\hat{\xi}^\lambda_t)dt\right].
\end{align*}
We get
\begin{align}\label{p4}
&\mathbb{E}\left[\int_0^{\hat{\theta}^\lambda}f(t,X_t,\hat{m}^\lambda_t)dt\right]-\lambda \mathbb{E}\left[\int_0^{\hat{\theta}^\lambda}(1+\log(U))dt \right]=\nonumber \\
&\mathbb{E}\left[\int_0^T f(t,X_t,\hat{m}^\lambda_t)(1-\hat{\xi}^\lambda_t)dt\right]-\lambda \mathbb{E}\left[\int_0^T (1-\hat{\xi}^\lambda_t)\log(1-\hat{\xi}^\lambda_t)dt\right].
\end{align}
By applying the "change of variable" formula from Chapter 0, Proposition 4.9 in \cite{revuz.yor.2013continuous}, we also have
\begin{align}\label{p5}
\mathbb{E}[g(X_{\hat{\theta}^\lambda},\hat{\mu}^\lambda)]=\mathbb{E}\left[\mathbb{E}[g(X_{\hat{\theta}^\lambda},\hat{\mu}^\lambda)|\mathcal{F}_T^{x_0,W}]\right]=\mathbb{E}\left[\int_0^1 g(X_{\hat{\theta}^\lambda(\cdot,u)},\hat{\mu}^\lambda)du\right] =\mathbb{E}\left[\int_0^T g(X_t,\hat{\mu}^\lambda)d\hat{\xi}^\lambda\right].
\end{align}
Thus, in view of \eqref{p4} and \eqref{p5}, we derive:
\begin{align}
&\mathbb{E}\left[\int_0^{\hat{\theta}^\lambda}f(t,X_t,\hat{m}^\lambda_t)dt\right]-\lambda \mathbb{E}\left[\int_0^{\hat{\theta}^\lambda}(1+\log(U))dt \right]+\mathbb{E}[g(X_{\hat{\theta}^\lambda},\hat{\mu}^\lambda)]=\nonumber \\
&\mathbb{E}\left[\int_0^T f(t,X_t,\hat{m}^\lambda_t)(1-\hat{\xi}^\lambda_t)dt\right]-\lambda \mathbb{E}\left[\int_0^T (1-\hat{\xi}^\lambda_t)\log(1-\hat{\xi}^\lambda_t)dt\right]+\mathbb{E}\left[\int_0^T g(X_t,\hat{\mu}^\lambda)d\hat{\xi}^\lambda\right].
\end{align}
It remains to show that 
\begin{align}\label{res}
&\mathbb{E}\left[\int_0^{\hat{\theta}^\lambda}f(t,X_t,\hat{m}^\lambda_t)dt\right]-\lambda \mathbb{E}\left[\int_0^{\hat{\theta}^\lambda}(1+\log(U))dt \right] +\mathbb{E}[g(X_{\hat{\theta}^\lambda},\hat{\mu}^\lambda)] \geq \nonumber \\
& \mathbb{E}\left[\int_0^{\theta}f(t,X_t,\hat{m}^\lambda_t)dt\right]-\lambda \mathbb{E}\left[\int_0^{\theta}(1+\log(U))dt \right]+\mathbb{E}[g(X_{{\theta}},\hat{\mu}^\lambda)], \text{ for all } \theta \in \bar{\mathcal{T}}.
\end{align}

We assume that $\hat{\theta}^\lambda \in \bar{\mathcal{T}}$ is not a maximiser and proceed by contradiction. Let   $\bar{\theta} \in \bar{\mathcal{T}}$ be the smallest optimal stopping time. By assumption, we have 
\begin{align}\label{p2}
&\mathbb{E}\left[\int_0^{\bar{\theta}}f(t,X_t,m_t)dt\right]-\lambda \mathbb{E}\left[\int_0^{\bar{\theta}}(1+\log(U))dt \right] +\mathbb{E}[g(X_{\bar{\theta}},{\mu}^\lambda)]  \nonumber \\
& > \mathbb{E}\left[\int_0^{\hat{\theta}^\lambda}f(t,X_t,m_t)dt\right]-\lambda \mathbb{E}\left[\int_0^{\hat{\theta}^\lambda}(1+\log(U))dt \right]+\mathbb{E}[g(X_{\hat{\theta}^\lambda},\hat{\mu}^\lambda)].
\end{align}
Since $\bar{\theta}=\bar{\theta}(x_0,W,U)$ is the smallest optimal stopping time, it follows by similar arguments as in  Lemma 1 in \cite{baldursson1996irreversible} that the map $u \to \bar{\theta}(x_0,W,u)$ is a.s. decreasing, therefore we can define as in \eqref{def0} the right-continuous generalized inverse $\bar{\xi}_t$ as follows:
\begin{align*}
     \bar{\xi}_t:=\sup\{u \in [0,1]: \bar{\theta}(x_0,W,1-u)\leq t\}.\end{align*}
Observe that we have
\begin{align*}
&\mathbb{E}\left[\int_{0}^{\bar{\theta}}\left(f (t, X_{t}, \hat{m}^\lambda_t) -\lambda(1+\log U)\right) d t+g (X_{\bar{\theta}}, \hat{\mu}^\lambda )\right] \nonumber \\
& = \mathbb{E}\left[\int_{0}^T\left(f (t, X_{t}, \hat{m}^\lambda_t) -\lambda(1+\log U)\right)\mathds{1}_{\{t < \bar{\theta}\}} d t+g (X_{\bar{\theta}}, \hat{\mu}^\lambda ) \right]
\end{align*}
By definition of $(\bar{\xi}_t)$, it holds that
\begin{align}
\{ \bar{\theta}(\cdot,u) \leq t\}=\{ \bar{\xi}_t \geq 1-u\}.
\end{align}

In view of the above, and then by conditioning with respect to $\mathbb{F}^{x_0,W}$, using the measurability of $f(t,X_t, \hat{m}_t^\lambda)$ with respect to $\mathbb{F}^{x_0,W}$ and again the "change of variable" formula, we get
\begin{align}\label{p1}
& \mathbb{E}\left[\int_{0}^T\left(f (t, X_{t}, \hat{m}^\lambda_t) -\lambda(1+\log U)\right)\mathds{1}_{\{t < \bar{\theta}\}} d t+g (X_{\bar{\theta}}, \hat{\mu}^\lambda ) \right] \nonumber \\
& = \mathbb{E}\left[\int_{0}^T\left(f (t, X_{t}, \hat{m}^\lambda_t)(1-\bar{\xi}_t)dt\right] -\mathbb{E}\left[ \int_0^T \left(\int_0^1 \lambda(1+\log u)\right)\mathds{1}_{\{t < \bar{\theta}(x_0,W,u)\}}\right)du d t+g (X_{\bar{\theta}}, \hat{\mu}^\lambda )\right] \nonumber \\
& = \mathbb{E}\left[\int_{0}^T\left(f (t, X_{t}, \hat{m}^\lambda_t)(1-\bar{\xi}_t)dt\right] -\mathbb{E}\left[ \int_0^T \left(\int_0^1 \lambda(1+\log u)\right)\mathds{1}_{\{1-u > \bar{\xi_t}\}}\right)du d t+\int_0^1 g (X_{\bar{\theta}(W,u)}, \hat{\mu}^\lambda )du \right] \nonumber \\
& \leq  \mathbb{E}\left[\int_{0}^T(f (t, X_{t}, \hat{m}^\lambda_t)(1-\bar{\xi}_t)dt\right] -\mathbb{E}\left[ \int_0^T  \lambda(1-\bar{\xi}_t)\log (1-\bar{\xi}_t) d t\right]+\mathbb{E}\left[\int_0^T  g (X_{t}, \hat{\mu}^\lambda )d\bar{\xi}_t\right].
\end{align}
From \eqref{p4}-\eqref{p2}-\eqref{p1}, it follows that 
\begin{align}
&\mathbb{E}\left[\int_{0}^T(f (t, X_{t}, \hat{m}^\lambda_t)(1-\bar{\xi}_t)dt\right] -\mathbb{E}\left[ \int_0^T  \lambda(1-\bar{\xi}_t)\log (1-\bar{\xi}_t) d t\right]+\mathbb{E}\left[\int_0^T  g (X_{t}, \hat{\mu}^\lambda )d\bar{\xi}_t\right] \nonumber \\
&>\mathbb{E}\left[\int_0^T f(t,X_t,m_t)(1-\hat{\xi}^\lambda_t)dt\right]-\lambda \mathbb{E}\left[\int_0^T (1-\hat{\xi}^\lambda_t)\log(1-\hat{\xi}^\lambda_t)dt\right]+\mathbb{E}\left[\int_0^T  g (X_{t}, \hat{\mu}^\lambda )d\hat{\xi}^\lambda_t\right],
\end{align}
which contradicts the optimality of $\hat{\xi}^\lambda$. Therefore \eqref{res} holds true.

By using the same arguments as in the derivation of the consistency conditions \eqref{eq consistency m singular control} and \eqref{eq consistency mu singular control}, we also derive that 
$$\hat{m}_t^\lambda(A)=\mathbb{P}\left[X_t \in A, t \leq \hat{\theta}^\lambda\right],\, A \in \mathcal{B}(\mathbb{R}^d)$$
and
$$\hat{\mu}^\lambda(B \times [0,t])=\mathbb{P}\left[X_{\hat{\theta}^\lambda} \in B, \hat{\theta}^\lambda \leq t\right],\,\, B \in \mathcal{B}(\mathbb{R}^d).$$
\end{proof}

We now present the converse result.
\begin{theorem}
     Let $(\hat{m}^\lambda, \hat{\mu}^\lambda, \hat{\theta}^\lambda)$ be an equilibrium to the $\lambda$-OS-MFG. Assume that $u \to \hat{\theta}^\lambda(\cdot,u)$ is a.s.\ decreasing. Define
     \begin{align}\label{def}
     \hat{\xi}_t^\lambda:=\sup\{u \in [0,1]: \hat{\theta}^\lambda(x_0,W,1-u)\leq t\}.\end{align}
     Then $(\hat{m}^\lambda, \hat{\mu}^\lambda, \hat{\xi}^\lambda)$ is an equilibrium to $\lambda$-SC-MFG .
    \end{theorem}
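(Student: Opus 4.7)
My plan is to reverse the reduction carried out in Theorem \ref{first_implic}. The argument splits into an admissibility check, a value/consistency identity, and a contradiction argument for optimality.

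First, I would verify that $\hat{\xi}^\lambda \in \mathcal{A}$. As a right-continuous generalized inverse, the process $\hat{\xi}^\lambda$ is automatically $[0,1]$-valued, nondecreasing and c\`adl\`ag. Since $\hat{\theta}^\lambda \leq T$ a.s., the set $\{u\in[0,1]:\hat{\theta}^\lambda(\cdot,1-u)\leq T\}$ equals $[0,1]$, so $\hat{\xi}^\lambda_T = 1$. Adaptedness to $\mathbb{F}^{x_0,W}$ follows from the fact that, by disintegration of the $\mathbb{F}^{x_0,W,U}$-stopping time $\hat{\theta}^\lambda$ over $U$, for each fixed $u$ the map $(x_0,W)\mapsto \hat{\theta}^\lambda(x_0,W,1-u)$ is an $\mathbb{F}^{x_0,W}$-stopping time; hence $\{\hat{\theta}^\lambda(\cdot,1-u)\leq t\}\in \mathcal{F}^{x_0,W}_t$, and a measurable supremum over $u$ preserves $\mathcal{F}^{x_0,W}_t$-measurability. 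The monotonicity assumption on $u\mapsto\hat{\theta}^\lambda(\cdot,u)$ then yields the duality identity
\begin{equation*}
\{t<\hat{\theta}^\lambda\}=\{\hat{\xi}^\lambda_t<1-U\},
\end{equation*}
which is the reverse of \eqref{eqq} and is the engine of all subsequent calculations.

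Next, I would reproduce verbatim the block of computations \eqref{p4}--\eqref{p5} of the proof of Theorem \ref{first_implic} to obtain the value identity
\begin{equation*}
\mathbb{E}\left[\int_0^{\hat{\theta}^\lambda}\bigl(f(t,X_t,\hat{m}^\lambda_t)-\lambda(1+\log U)\bigr)dt + g(X_{\hat{\theta}^\lambda},\hat{\mu}^\lambda)\right] = J^\lambda(\hat{\xi}^\lambda,\hat{m}^\lambda,\hat{\mu}^\lambda).
\end{equation*}
The three ingredients used there -- conditioning on $\mathcal{F}^{x_0,W}_t$ via the duality identity, the Fubini computation $\mathbb{E}[\int_0^{\hat{\theta}^\lambda}(1+\log U)dt]=\mathbb{E}[\int_0^T (1-\hat{\xi}^\lambda_t)\log(1-\hat{\xi}^\lambda_t)dt]$, and the change-of-variable formula (Proposition 4.9, Chapter 0 of \cite{revuz.yor.2013continuous}) -- depend only on the duality above and the generalized-inverse construction, not on the direction of the reduction. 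The SC-MFG consistency conditions for $(\hat{m}^\lambda,\hat{\mu}^\lambda,\hat{\xi}^\lambda)$ follow in the same way from the OS-MFG consistency for $(\hat{m}^\lambda,\hat{\mu}^\lambda,\hat{\theta}^\lambda)$ by repeating the derivations \eqref{eq consistency m singular control} and \eqref{eq consistency mu singular control}.

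It then remains to prove the SC-MFG optimality of $\hat{\xi}^\lambda$, which I would do by contradiction. Suppose there exists $\xi'\in\mathcal{A}$ with $J^\lambda(\xi',\hat{m}^\lambda,\hat{\mu}^\lambda)>J^\lambda(\hat{\xi}^\lambda,\hat{m}^\lambda,\hat{\mu}^\lambda)$, and set $\theta':=\inf\{t\in[0,T]:\xi'_t\geq 1-U\}\in\bar{\mathcal{T}}$. Since $\theta'$ is built \emph{directly} from $\xi'$, the duality $\{t<\theta'\}=\{\xi'_t<1-U\}$ holds exactly, so the computation of the previous paragraph, applied to the pair $(\xi',\theta')$, yields the \emph{equality}
\begin{equation*}
\mathbb{E}\left[\int_0^{\theta'}\bigl(f(t,X_t,\hat{m}^\lambda_t)-\lambda(1+\log U)\bigr)dt + g(X_{\theta'},\hat{\mu}^\lambda)\right] = J^\lambda(\xi',\hat{m}^\lambda,\hat{\mu}^\lambda).
\end{equation*}
Combining this with the value identity for $\hat{\xi}^\lambda$, we conclude that $\theta'$ strictly outperforms $\hat{\theta}^\lambda$ in the $\lambda$-OS-MFG problem at $(\hat{m}^\lambda,\hat{\mu}^\lambda)$, contradicting the optimality of $\hat{\theta}^\lambda$.

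The main subtlety I anticipate is precisely the contrast with the proof of Theorem \ref{first_implic}: there, the analogous reduction \eqref{p1} only produced an \emph{inequality}, because one started from a general OS-MFG stopping time $\bar{\theta}$ whose generalized inverse $\bar{\xi}$ need not recover $\bar{\theta}$ exactly as a randomized stopping time. In the reverse direction this issue disappears, since we start from $\xi'\in\mathcal{A}$ and build $\theta'$ as its canonical randomized stopping time, making the distributional matching exact. The only remaining delicate points are the measurability check of Step 1, which is handled via the disintegration of $\mathbb{F}^{x_0,W,U}$-stopping times, and verifying that the monotonicity hypothesis on $u\mapsto\hat{\theta}^\lambda(\cdot,u)$ suffices to make the duality $\{\hat{\theta}^\lambda\leq t\}=\{1-U\leq\hat{\xi}^\lambda_t\}$ an identity (rather than merely an almost-sure equality up to null sets where the inverse jumps).
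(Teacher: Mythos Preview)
Your proposal is correct and takes the same approach as the paper, which in fact omits the proof entirely with the remark that it ``follows the same arguments as for the previous result.'' Your contradiction argument---constructing $\theta'$ from a hypothetical improver $\xi'\in\mathcal A$ and exploiting that the value identity then holds as an equality rather than the inequality of \eqref{p1}---is precisely the symmetric reduction the paper has in mind.
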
 
\begin{proof}
The proof follows the same arguments as for the previous result, therefore we omit it.
\end{proof}

Notice that the assumed requirement that $u \to \hat{\theta}^\lambda(\cdot,u)$ is a.s.\ decreasing can be easily shown to be valid when $\hat{\theta}^\lambda(\cdot,u)$ is the smallest optimal stopping time for the optimal stopping problem of Definition \ref{def:defOSequilibrium} (as it was already used in the proof of Theorem \ref{first_implic}).

\medskip 
\noindent \textbf{Acknowledgements.} 
Supported  by the Deutsche Forschungsgemeinschaft (DFG, German Research Foundation) - Project-ID 317210226 - SFB 1283, and the NSF CAREER award DMS-2339240 and the FIME  research initiative of the Institut
Europlace de Finance.
\bibliographystyle{siam}

\end{document}